\documentclass[12pt]{amsart}
\usepackage{amssymb}

\newtheorem{theorem}{Theorem}[section]
\newtheorem{lemma}[theorem]{Lemma}

\newtheorem{corollary}[theorem]{Corollary}
\newtheorem{proposition}[theorem]{Proposition}

\newtheorem{lem-def}[theorem]{Lemma-Definition}

\newcommand{\hooklongrightarrow}{\lhook\joinrel\longrightarrow}
\renewenvironment{proof}{{\bfseries Proof.}}{\qed}
\topmargin -.4cm
\evensidemargin 1cm
\oddsidemargin 1cm

\newcommand{\R}{\mathbb R}

\newcommand{\M}{\mathbb M}
\newcommand{\N}{\mathbb N}

\newcommand{\Q}{\mathbb Q}

\newcommand{\T}{\mathbb T}

%\newcommand{\Vc}{\mathbb V^{\operatorname{com}}}

%\lamberwithin{equation}{section}
%\newcommand{\st}[1]{\vskip 1mm\noindent{\bf #1}\ \,}
%\newcommand{\stst}[1]{\vskip 1mm\noindent\hskip6mm{\bf #1}\ \,}
%\newcommand{\ststst}[1]{\vskip 1mm\noindent\hskip12mm{\bf #1}\ \,}
\def\op{\operatorname}

\def\aa{\mathcal{A}}

\def\al{\alpha}

\def\ars#1{\renewcommand\arraystretch{#1}}

\def\bb{{\mathcal B}}

\def\be{\beta}

\def\cfa{\left(\ri\right)_{i\in A}}

\def\chr{\op{char}}

\def\cl#1{\left[\,#1\,\right]_\mu}
\def\clok#1{\left[\,#1\,\right]_{\op{ok}}}

\def\defn{\nn{\bf Definition. }}

\def\dg{\op{deg}}
\def\dgi{\op{deg}_\infty}

\def\diso{\lower.4ex\hbox{$\downarrow$}\raise.4ex\hbox{\mbox{\scriptsize
$\wr$}}}

\def\dta{\delta}

\def\e{\medskip}

\def\ep#1{\exp(\Pi i#1)}
\def\ep{\epsilon}

\def\f{\mathfrak{f}}

\def\fin{{\op{fin}}}

\def\g{\Gamma}
\def\ga{\gamma}
\def\gaal{\ga_{\aa_\ell}}
\def\gal{\op{Gal}}

\def\gen#1{\big\langle\, {#1} \,\big\rangle}
\def\gf{\mathfrak{g}}

\def\ggm{\mathcal{G}_\mu}

\def\gm{\g_\mu}

\def\gq{\g_\Q}
\def\gr{\operatorname{gr}}

\def\gsme{\g_{\op{sme}}}

\def\h{\mathfrak{h}}

\def\hk{\hookrightarrow}

\def\hra{\hooklongrightarrow}

\def\imp{\ \Longrightarrow\ }

\def\ind{\op{ind}}

\def\inm{\op{in}_\mu}

\def\irr{\op{Irr}}
\def\ism{\lower.3ex\hbox{\ars{.08}$\begin{array}{c}\,\to\\\mbox{\tiny $\sim\,$}\end{array}$}}
\def\iso{\ \lower.3ex\hbox{\ars{.08}$\begin{array}{c}\lra\\\mbox{\tiny $\sim\,$}\end{array}$}\ }

\def\ka{\kappa}
\def\kam{\kappa(\mu)}
\def\kb{\overline{K}}
\def\kh{K^h}
\def\khx{K^h[x]}
\def\km{k_\mu}

\def\kp{\op{KP}}

\def\kpi{\op{KP}_\infty}
\def\kpm{\op{KP}(\mu)}
\def\kpmz{\kp_{\op{str}}(\mu)}

\def\kpr{\op{KP}(\rho)}

\def\ks{K^{\op{sep}}}

\def\kx{K[x]}

\def\La{\Lambda}

\def\lfin{\ll_\fin}
\def\lg{l\raise.6ex\hbox to.2em{\hss.\hss}l}

\def\ll{\mathcal{L}}
\def\lra{\,\longrightarrow\,}

\def\mal{\mu_{\aa_\ell}}

\def\minf{\om_{-\infty}}

\def\mlv{Mac Lane--Vaqui\'e\ }

\def\mmu{\mid_\mu}

\def\nn{\noindent}

\def\ok{\sim_{\op{ok}}}
\def\om{\omega}

\def\orb{\hbox to  .3em{$\backslash$}\backslash}

\def\p{\mathfrak{p}}

\def\prh{\pset(\rho)}

\def\pmph{\pset_\mu(\phi)}

\def\ppa{\mathcal{P}_{\alpha}}
\def\prim{\op{Prim}}

\def\pset{\mathcal{P}}

\def\ral{\rho_{\aa_\ell}}

\def\rf{\rho(\f)}
\def\rf{\rho(\f)}
\def\rfp{\rho(\f')}

\def\rg{\rho(\mathfrak{g})}
\def\rha{\rho_\aa}

\def\ri{\rho_i}

\def\rj{\rho_j}

\def\rsme{\R_\sme}

\def\rrk{\op{rr}}

\def\sii{\ \Longleftrightarrow\ }

\def\sme{{\mbox{\tiny $\op{sme}$}}}
\def\smu{\sim_\mu}

\def\sind{^{\ind}}

\def\sp{\op{Spec}}

\def\supp{\op{supp}}

\def\sval{\operatorname{sv}}
\def\t{\theta}

\def\td{\mbox{\rm\bf td}}
\def\tdm{\td(\mu)}

\def\tla{\ttt(\La)}
\def\tmn{\ty(\mu,\nu)}

\def\tprim{\T^{\op{prim}}}

\def\tq{\ttt_\Q}

\def\ts{\mathcal{T}_\sme}
\def\ttt{\mathcal{T}}
\def\ty{\mathbf{t}}

\def\vb{\overline{v}}

\def\wb{\overline{w}}
\def\wt{\op{wt}}

\newcounter{cs}
\stepcounter{cs}
\newcommand{\casos}{\begin{itemize}}
\newcommand{\fcasos}{\end{itemize}\setcounter{cs}{1}}

\newfont{\tit}{cmr12 scaled \magstep3}

\setlength{\textwidth}{6. in}
\setlength{\textheight}{9 in}

\title[Okutsu frames]{Okutsu frames of irreducible polynomials over henselian fields}

\makeatletter
\@namedef{subjclassname@2010}{%
  \textup{2010} Mathematics Subject Classification}
\subjclass[2010]{Primary 13A18; Secondary 12J20, 13J10, 14E15}%, 12J10}

\author[Alberich]{Maria Alberich-Carrami$\tilde{\mbox{n}}$ana}
\address{Institut de Rob\`otica i Inform\`atica Industrial (IRI, CSIC-UPC), Institut de Mate\-mà\-tiques de la UPC-BarcelonaTech (IMTech) and Departament de Mate\-m\`a\-tiques, Universitat Polit\`ecnica de Cata\-lunya $\cdot$ BarcelonaTech, Av. Diagonal, 647, E-08028 Barcelona, Catalonia}
\email{Maria.Alberich@upc.edu}
%\author[F. Boix]{Alberto F. Boix}
%\address{Ben Gurion Univ Negev, Dept Math, POB 653, IL-84105 Beer Sheva, Israel}
%\email{albertof.boix@gmail.com}
%\author[Fern\'andez]{Julio Fern\'andez}
\author[Gu\`ardia]{Jordi Gu\`ardia}
\address{Departament de Matem\`atiques, Escola Polit\`ecnica Superior d'Enginye\-ria de Vilanova i la Geltr\'u, Av. V\'\i ctor Balaguer s/n. E-08800 Vilanova i la Geltr\'u, Catalonia}
\email{jordi.guardia-rubies@upc.edu}
%\email{julio.fernandez.g@upc.edu, jordi.guardia-rubies@upc.edu}
\author[Nart]{Enric Nart}
\author[Ro\'e]{Joaquim Ro\'e}
\address{Departament de Matem\`{a}tiques,         Universitat Aut\`{o}noma de Barcelona,         Edifici C, E-08193 Bellaterra, Barcelona, Catalonia}
\email{nart@mat.uab.cat,\quad jroe@mat.uab.cat}
%\address{Departament de Matem\`{a}tiques,         Universitat Aut\`{o}noma de Barcelona,         Edifici C, E-08193 Bellaterra, Barcelona, Catalonia}
%\email{nart@mat.uab.cat}

\thanks{Partially supported by grants  PID2020-116542GB-I00 and PID2019-103849GB-I00 from the Spanish Research Agency, and grant 2017SGR-932 from Generalitat de Catalunya}
\date{\today}
\keywords{henselian field, key polynomial, Mac Lane-Vaquié chain, Okutsu frame, valuation}

\begin{document}
\subjclass[2010]{13A18 (12J10)}
%MTM2016-75980-P, MTM2015-69135-P

%\pagestyle{empty}

%No assumption is made on the rank of the valuations.
\begin{abstract}
For a henselian valued field $(K,v)$ we establish a complete parallelism between the arithmetic properties of irreducible polynomials $F\in \kx$,  encoded by their Okutsu frames, and the valuation-theoretic properties of their induced valuations $v_F$ on $\kx$, encoded by their Mac Lane-Vaqui\'e chains. This parallelism was only known for defectless irreducible polynomials.
\end{abstract}

\maketitle

%\begin{center}\sl Preliminary version \end{center}

%\tableofcontents

\section*{Introduction}
The pioneering work of Mac Lane on valuations on a polynomial ring \cite{mcla},  was inspired in a question of Ore about the design of an algorithm to compute prime ideal decomposition in number fields \cite{ore1}. To solve this question, Mac Lane used the methods of \cite{mcla} to develop a polynomial factorization algorithm over the completion  $K_v$ of any discrete rank-one valued field $(K,v)$ \cite{mclb}. The algorithm finds  \emph{key polynomials} of certain valuations on $\kx$, as approximations to the irreducible factors in $K_v[x]$ of any given separable polynomial in $\kx$. 

Motivated by the computation of integral bases in finite extensions of local fields, Okutsu constructed  similar approximations without using valuations on $\kx$, nor key polynomials \cite{Ok}. Each irreducible polynomial $F\in K_v[x]$ admits an \emph{Okutsu frame}, which is a finite list  of polynomials which are best possible approximations to $F$ among all polynomials with degree smaller than a certain bound.

Fern\'andez, Gu\`ardia, Montes and Nart showed that the approaches of Ore-Mac Lane and Okutsu are essentially equivalent in the discrete rank-one case \cite{ResidualIdeals,okutsu}.

The techniques of Mac Lane in \cite{mcla}
were extended to arbitrary valued fields independently by Vaqui\'e \cite{Vaq} and Herrera-Mahboub-Olalla-Spivakovsky \cite{hos,hmos}. %Every valuation on $\kx$ may be obtained from a monomial valuation by a combination of ordinary augmentations, limit augmentations and stable limits. 
The extension of \cite{mclb} to a polynomial factorization algorithm over arbitrary henselian fields is still an open problem. %The main obstacle is the appearance of \emph{limit augmentations} of valuations on $\kx$. 

Let $(K,v)$ be a henselian valued field of an arbitrary rank. Let
$\irr(K)$ be the set of all monic irreducible polynomials in $\kx$.
 Every $F\in\irr(K)$ determines a valuation $v_F$ on $\kx$ given by $$v_F(f)=v(f(\t)),\quad\mbox{for all }f\in\kx,$$where $\t$ is any root of $F$ in an algebraic closure of $K$. 

 This valuation $v_F$ is the end node of some finite \emph{Mac Lane-Vaqui\'e (MLV) chain} of valuations on $K(x)$ 
 $$
 \mu_0\,\lra\,\mu_1\,\lra\,\cdots\,\lra\,\mu_r\,\lra\,v_F
 $$
 where $\mu_0$ admits key polynomials of degree one, and every step $\mu_i\to\mu_{i+1}$ may be either an ordinary or a limit augmentation. 
 
 We say that $F\in\irr(K)$ is \emph{defectless} if the unique extension $w$ of $v$ to $\kx/(F)$ satisfies $\deg(F)=e(w/v)f(w/v)$; or, equivalently, if the MLV chain of $v_F$ contains only ordinary augmentations \cite{Vaq2,defless}. 

The results of \cite{ResidualIdeals,okutsu} were generalized in \cite{defless} to approximate  defectless polynomials over arbitrary henselian fields.
A certain \emph{Okutsu equivalence relation} $\ok$ was defined on the set $\op{Dless}$  of all defectless polynomials, so that the quotient set  $\op{Dless}/\!\!\ok$ admits a parametrization by a \emph{Mac Lane space} $\M$ described in terms of valuations on $\kx$ \cite[Thm. 5.14]{defless}. %, described in terms of \emph{types}, a computational construction by Montes \cite{montes,ET}.
On the other hand, a complete parallelism was established between the arithmetic properties of any $F\in\op{Dless}$,  encoded by their Okutsu frames, and the valuation-theoretic properties of the valuation $v_F$, encoded by their MLV chains \cite[Thms. 5.5, 5.6]{defless}.

In this paper, we apply the methods of \cite[Secs. 6,7]{VT} on valuative trees to generalize all these results
to arbitrary irreducible polynomials in $\kx$. The main obstacle is the presence of limit augmentations in the MLV chains of $v_F$. This implies that there are no longer ``best possible" approximations to $F$ of a given bounded degree, so that Okutsu frames need to be reformulated.

The structure of the paper is as follows. Section \ref{secVals} collects preliminary results on key polynomials, valuative trees and their tangent spaces. Let $\tq$ be the tree whose nodes are extensions of $v$ to valuations on $\kx$ taking values in the divisible hull of the group $v(K^*)$. This tree admits a certain \emph{small-extensions closure} $\tq\subset\ts$ whose tangent space  plays a key role.  

In Section \ref{secOk}, we generalize \cite[Thm. 5.14]{defless}. We introduce an Okutsu equivalence relation $\ok$ on the set $\lfin(\tq)$ of finite leaves of the tree $\tq$. After a natural identification of $\lfin(\tq)$ with $\irr(K)$, the quotient set $\irr(K)/\!\!\ok$ may be parametrized by the set $\tprim$ of primitive tangent vectors in the tangent space of $\ts$. 

In Section \ref{secOkHensel} we show that two irreducible polynomials are Okutsu equivalent if and only if they are ``sufficiently close" with respect to the classical ultrametric topology induced by $v$. Also, we see that the subset of $\tprim$ corresponding to defectless polynomials may be identified with the Mac Lane space $\M$.

In Section \ref{secOkFrames}, an \emph{Okutsu frame} of any $F\in\irr(K)$ is defined as a list of sets of polynomials: $\left[\Phi_0,\dots,\Phi_r,\Phi_{r+1}=\{F\}\right]$.
Each $\Phi_i$ contains monic polynomials $f\in \kx$ of constant degree $m_i$, whose weighted values $v_F(f)/\deg(f)$ are cofinal in the set of all weighted $v_F$-values of polynomials of degree less than $m_{i+1}$. These degrees satisfy
$$
1=m_0\,\mid\,m_1\,\mid\,\cdots\,\mid\,m_r\,\mid\,m_{r+1}=\deg(F).
$$

Theorems \ref{MLOk} and \ref{OkML} show that the Okutsu frames of $F$ and the MLV chains of $v_F$ are essentially equivalent objects.
As a consequence, we obtain still another interpretation of Okutsu frames: 
the set $\Phi_0\cup\cdots\cup\Phi_r\cup\{F\}$ is a complete set of \emph{abstract} key polynomials of $v_F$, in the terminology of \cite{Dec, NS2018}.%\e

%\nn{\bf Notation. }For any field $L$, we shall denote by  $\irr(L)$ the set of all monic irreducible polynomials in $L[x]$.

%\newpage

\section{Valuative trees}\label{secVals}

In this section we recall some  results on valuations on a polynomial ring, valuative trees and their tangent spaces.

Let $(K,v)$ be a valued field. Let $k$ be the residue class field, $\g=v(K^*)$ the value group and $\gq=\g\otimes\Q$ the divisible hull of $\g$. The \emph{rational rank} of $\g$ is  defined as $\rrk(\g)=\dim_\Q \gq$.

Let $\g\hk\Lambda$ be an extension of ordered abelian groups. We write simply $\Lambda\infty$ instead of $\Lambda\cup\{\infty\}$.
Consider a valuation on the polynomial ring $\kx$
$$
\mu\colon \kx\lra \Lambda\infty
$$
whose restriction to $K$ is $v$. 

Let $\p=\supp(\mu):=\mu^{-1}(\infty)\in\sp(\kx)$ be the support of $\mu$. 
The valuation $\mu$ induces in a natural way a valuation $\overline{\mu}$ on the field of fractions of $\kx/\p$, which is $K(x)$ if $\p=0$, or $\kx/\p$ if $\p=f\kx$ for some $f\in\irr(K)$.

The residue field $\km$ of $\mu$ is, by definition, the residue field of $\overline{\mu}$. The field $\kam$ of \emph{algebraic residues} of $\mu$ is the relative algebraic closure of $k$ in $\km$.

We say that $\mu$ is \emph{commensurable} (over $v$) if $\g_\mu/\g$ is a torsion group; or equivalently, $\rrk(\gm/\g)=0$. In this case, there is a canonical embedding $\g_\mu\hookrightarrow \gq$. 

All valuations on $\kx$ satisfy \emph{Abhyankar's inequality}
$$
\rrk(\gm/\g)+\op{trdeg} (\km/k)\le 1.
$$
This yields a basic classification of these valuations in three families:

\begin{itemize}
	\item $\mu$ is \emph{valuation-algebraic} if \ $\rrk(\gm/\g)=\op{trdeg} (\km/k)=0$.
	\item $\mu$ is incommensurable if \ $\rrk(\gm/\g)=1$.
	\item $\mu$ is \emph{residue transcendental} if \ $\op{trdeg} (\km/k)=1$. 
\end{itemize}

The valuations for which equality holds in Abhyankar's inequality, corresponding to the two latter families, are said to be \emph{valuation-transcendental}.
   
All valuations with nontrivial support are valuation-algebraic, but the converse statement is false. %There are valuation-algebraic valuations with trivial support. 

\subsection{Key polynomials}\label{subsecKP}

For any $\alpha\in\g_\mu$, consider the abelian groups:
$$
\ppa=\{g\in \kx\mid \mu(g)\ge \alpha\}\supset
\ppa^+=\{g\in \kx\mid \mu(g)> \alpha\}.
$$    
The \emph{graded algebra of $\mu$} is the integral domain:
$$
\ggm:=\gr_{\mu}(\kx)=\bigoplus\nolimits_{\alpha\in\g_\mu}\ppa/\ppa^+.
$$

There is an \emph{initial coefficient} mapping $\inm\colon \kx\to \ggm$, given by $\inm\p=0$ and 
$$
\inm g= g+\pset_{\mu(g)}^+\in\pset_{\mu(g)}/\pset_{\mu(g)}^+, \qquad\mbox{if }\ g\in \kx\setminus\p.
$$

 The following definitions translate properties of the action of  $\mu$ on $\kx$ into algebraic relationships in the graded algebra $\ggm$.\e

\defn Let $g,\,h\in \kx$.

We say that $g,h$ are \emph{$\mu$-equivalent}, and we write $g\smu h$, if $\inm g=\inm h$. 

We say that $g$ is \emph{$\mu$-divisible} by $h$, and we write $h\mmu g$, if $\inm h\mid \inm g$ in $\ggm$.

We say that $g$ is $\mu$-irreducible if $\inm g$ is a prime element; that is, it generates a nonzero homogeneus prime ideal. 

We say that $g$ is $\mu$-minimal if $g\nmid_\mu f$ for all nonzero $f\in \kx$ with $\deg(f)<\deg(g)$.\e

%The $\mu$-minimality condition admits a relevant characterization.

For all $g\in\kx\setminus K$ we define the \emph{truncation} $\mu_g$ as follows: 
$$f=\sum\nolimits_{0\le s}a_s g^s,\quad \deg(a_s)<\deg(g)\ \imp\ \mu_g(f)=\min\left\{\mu\left(a_sg^s\right)\mid 0\le s\right\}.$$
This function $\mu_g$ is not necessarily a valuation, but it is useful to characterize the  $\mu$-minimality of $g$. Let us recall \cite[Prop. 2.3]{KP}.

\begin{lemma}\label{minimal0}
A polynomial  $g\in \kx\setminus K$ is $\mu$-minimal if and only if $\mu_g=\mu$.
\end{lemma}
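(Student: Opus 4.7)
The plan is to prove both implications by translating everything into the graded algebra $\ggm$ and exploiting the $g$-adic expansion underlying the definition of $\mu_g$. The key structural fact I will repeatedly use is that $\ggm$ is an integral domain, so the product of nonzero initial terms is the initial term of the product (that is, $\inm(fh)=\inm(f)\inm(h)$ whenever neither factor lies in $\mu^{-1}(\infty)$), and nonzero homogeneous elements can be cancelled.

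For the direction $\mu_g=\mu\imp g$ is $\mu$-minimal, I argue by contradiction. Suppose there is a nonzero $f\in\kx$ with $\deg(f)<\deg(g)$ and $g\mmu f$. Then $\inm f=\inm g\cdot e$ for some homogeneous $e\in\ggm$; lifting $e=\inm h$ for suitable $h\in\kx$ yields $\inm f=\inm(gh)$, so $f\smu gh$, and in particular $\mu(f-gh)>\mu(f)$. Writing the $g$-adic expansion $h=\sum_i h_ig^i$ with $\deg(h_i)<\deg(g)$, one has $gh=\sum_{j\ge 1}h_{j-1}g^j$, so the $g$-adic expansion of $f-gh$ has constant term exactly $f$ (since $\deg(f)<\deg(g)$). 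By the very definition of $\mu_g$ this forces $\mu_g(f-gh)\le\mu(f)$, contradicting the hypothesis $\mu_g(f-gh)=\mu(f-gh)>\mu(f)$.

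For the converse, assume $g$ is $\mu$-minimal, fix $f=\sum_s a_sg^s$ with $\deg(a_s)<\deg(g)$, and observe that the ultrametric inequality gives $\mu(f)\ge\min_s\mu(a_sg^s)=\mu_g(f)$, so it suffices to rule out strict inequality. Set $\alpha=\mu_g(f)$ and $S=\{s:\mu(a_sg^s)=\alpha\}$; the assumption $\mu(f)>\alpha$ becomes, after passing to $\pset_\alpha/\pset_\alpha^+$, the relation
$$
\sum_{s\in S}\inm(a_s)\inm(g)^s=0\quad\text{in }\ggm.
$$
Let $s_0=\min S$; since $\ggm$ is a domain, cancelling the nonzero factor $\inm(g)^{s_0}$ yields
$$
\inm(a_{s_0})=-\inm(g)\sum_{s\in S,\,s>s_0}\inm(a_s)\inm(g)^{s-s_0-1},
$$
whence $\inm g\mid\inm a_{s_0}$ in $\ggm$, i.e.\ $g\mmu a_{s_0}$. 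Since $a_{s_0}\ne 0$ has degree strictly less than $\deg(g)$, this contradicts the $\mu$-minimality of $g$.

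The argument is essentially formal. The only mildly delicate points are keeping the $g$-adic bookkeeping straight in the first implication and extracting a divisibility relation from an annihilation in $\ggm$ in the second; both rest squarely on the fact that $\ggm$ is an integral domain. I do not foresee any real obstacle.
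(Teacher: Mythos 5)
Your two core arguments are correct and are essentially the standard proof of this statement, which the paper does not reprove but simply quotes from \cite[Prop.~2.3]{KP}: in one direction you compare the $g$-adic expansion of $f-gh$ with the inequality $\mu(f-gh)>\mu(f)$ coming from $f\smu gh$; in the other you cancel the nonzero factor $\inm (g)^{s_0}$ in the integral domain $\ggm$ to extract a divisibility $\inm g\mid \inm a_{s_0}$ with $0\ne a_{s_0}$, $\deg(a_{s_0})<\deg(g)$, contradicting $\mu$-minimality. The graded-algebra bookkeeping (homogeneity of $e$, lifting $e=\inm h$, uniqueness of the $g$-adic expansion of $f-gh$) is all fine.

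The one point you never address is the support, and in the generality of this section ($\p=\supp(\mu)$ may be nonzero) it is not vacuous, because $\mu$-minimality quantifies over \emph{all} nonzero $f$ of smaller degree, including those with $\inm f=0$. Your first implication tacitly assumes $\inm f\ne 0$: the step from $f\smu gh$ to $\mu(f-gh)>\mu(f)$ is empty when $\mu(f)=\infty$, and the lift $e=\inm h$ needs $e\ne0$. Concretely, if $\p=F\kx$ with $\deg(F)<\deg(g)$, then $\inm F=0$ is divisible by everything, so $g$ is never $\mu$-minimal, and to keep the equivalence one must prove $\mu_g\ne\mu$ by a separate computation; this does work when $g\notin\p$ (the $g$-adic expansion of $Fx^{\deg(g)-\deg(F)}$ is $a_1g+a_0$ with $a_1\in K^*$, so $\mu_g$ gives it a finite value while $\mu$ gives it $\infty$), but it fails when $g\in\p$, where one checks directly that $\mu_g=\mu$ although $g$ need not be $\mu$-minimal. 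So the statement is implicitly about $g\notin\supp(\mu)$ (which is how the paper uses it), and your proof should either record that hypothesis or add the missing case $\inm f=0$ along the lines above. Your second implication, by contrast, survives the support without change: if $\mu(g)=\infty$ then $S=\{0\}$ and the relation $\inm(a_0)=0$ already contradicts $\mu(a_0)=\alpha<\infty$.
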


\defn A  \emph{(Mac Lane-Vaqui\'e) key polynomial} for $\mu$ is a monic polynomial in $\kx$ which is simultaneously  $\mu$-minimal and $\mu$-irreducible. 

The set of key polynomials for $\mu$ is denoted $\kpm$. \e

All $\phi\in\kpm$ are irreducible in $\kx$. 
A \emph{tangent direction} of $\mu$ is a $\mu$-equivalence class $\cl{\phi}\subset\kpm$ determined by all key polynomials having the same initial coefficient in $\ggm$. We denote the set of all tangent directions of $\mu$ by:
$$
\tdm=\kpm/\!\smu.
$$
Since all polynomials in $\cl{\phi}$ have the same degree \cite[Prop. 6.6]{KP}, it makes sense to consider the degree $\deg\, [\phi]_\mu$ of a tangent direction.

The basic families of valuations can be characterized as follows by their tangent directions \cite[Thms. 1.2, 1.4]{VT}.

\begin{itemize}
	\item If $\mu$ is valuation-algebraic, then $\tdm=\emptyset$. That is, $\kpm=\emptyset$.
	\item If $\mu$ is incommensurable, then $\tdm$ is a one-element set.
	\item If $\mu$ is residue transcendental, then $\tdm$ is in bijection with $\irr(\ka(\mu))$. 
\end{itemize}

In the latter case, the bijection is determined by the choice of a key polynomial of minimal degree and a certain homogeneous unit in $\ggm$ \cite[Sec. 6]{KP}.\e  

\defn
Suppose that  $\kpm\ne\emptyset$ and take $\phi\in\kpm$ of minimal degree. The following data are independent of the choice of $\phi$:
$$
\deg(\mu)=\deg(\phi),\qquad \sval(\mu)=\mu(\phi),\qquad \wt(\mu)=\sval(\mu)/\deg(\mu).
$$
They are called the \emph{degree}, the \emph{singular value} and the \emph{weight} of $\mu$, respectively. 

\begin{theorem}\cite[Thm. 3.9]{KP}\label{univbound}
If $\kpm\ne\emptyset$, then for all monic $f\in \kx\setminus K$, we have $\,\mu(f)/\deg(f)\le \wt(\mu)$. Equality holds if and only if $f$ is $\mu$-minimal.
\end{theorem}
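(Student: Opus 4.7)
Let $\phi \in \kpm$ be a key polynomial of minimal degree, so that $\deg(\phi) = \deg(\mu)$ and $\wt(\mu) = \mu(\phi)/\deg(\phi)$. Since $\phi$ is $\mu$-minimal, Lemma \ref{minimal0} gives $\mu = \mu_\phi$, and hence for every $g \in \kx$ with $\phi$-expansion $g = \sum_{s\ge 0} a_s \phi^s$ (where $\deg(a_s) < \deg(\phi)$) we have the fundamental identity
\[
\mu(g) \;=\; \min_{s}\bigl(\mu(a_s) + s\mu(\phi)\bigr).
\]
This identity is the workhorse throughout.

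To establish the inequality, I would $\phi$-expand $f = \sum_{s=0}^q a_s\phi^s$; monicity of $f$ forces $a_q$ to be monic with $\deg(a_q) = \deg(f) - q\deg(\phi) < \deg(\phi)$ (possibly $a_q = 1$). The identity gives $\mu(f) \le \mu(a_q\phi^q) = \mu(a_q) + q\mu(\phi)$, so the global bound $\mu(f) \le \wt(\mu)\deg(f)$ reduces to the sublemma: every monic $a \in \kx$ of degree $m < \deg(\phi)$ satisfies $\mu(a) \le \wt(\mu)m$. To prove the sublemma, I would apply the same device to $a^{\deg(\phi)}$, which is monic of degree $m\deg(\phi)$: the leading term forces its $\phi$-expansion to have the shape $a^{\deg(\phi)} = \phi^m + \sum_{s<m} b_s\phi^s$ with $\deg(b_s) < \deg(\phi)$, and the identity then yields
\[
\deg(\phi)\,\mu(a) \;=\; \mu\bigl(a^{\deg(\phi)}\bigr) \;\le\; \mu(\phi^m) \;=\; m\mu(\phi),
\]
which is the required bound after dividing by $\deg(\phi)$.

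For the equivalence, the ``$\Leftarrow$'' direction uses the same device with the roles of $\phi$ and $f$ reversed. If $f$ is $\mu$-minimal with $n = \deg(f)$, Lemma \ref{minimal0} yields $\mu_f = \mu$, and the $f$-expansion of $\phi^n$ reads $\phi^n = f^{\deg(\phi)} + \sum_{t<\deg(\phi)} c_t f^t$ (with leading coefficient $1$ forced by monicity of $\phi^n$), so $n\mu(\phi) = \mu(\phi^n) \le \deg(\phi)\mu(f)$; combined with the inequality already proved, this forces $\mu(f)/n = \wt(\mu)$. For the ``$\Rightarrow$'' direction I would argue by contraposition: if $f$ is not $\mu$-minimal, some nonzero $g \in \kx$ of degree $< n$ satisfies $\inm f \mid \inm g$, so $g \smu fh$ for some $h \in \kx$ and $\mu(g) = \mu(f) + \mu(h)$. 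Scaling $g$ to be monic and applying the inequality to both $g$ and a monic associate of $h$, one obtains $\mu(g) \le \wt(\mu)\deg(g) < \wt(\mu)n$ and an analogous bound on $\mu(h)$ modulo leading-coefficient corrections; combined with $\mu(fh) = \mu(g)$, these should yield $\mu(f) < \wt(\mu)n$, contradicting equality.

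The main technical obstacle lies in this last contrapositive argument: since $h$ need not be monic and its polynomial degree is not tied to $\mu(h)$, the leading-coefficient bookkeeping is delicate, and one likely needs to compare the $\phi$-expansion of $fh$ with that of $g$ to sharpen the bounds enough to force the contradiction.
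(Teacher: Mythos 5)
Your proof of the inequality and of the implication ``$\mu$-minimal $\Rightarrow$ equality'' is correct and self-contained: the symmetric trick of applying Lemma \ref{minimal0} first to $\phi$ (expanding $a^{\deg(\phi)}$ to bound coefficients of degree $<\deg(\phi)$) and then to $f$ (expanding $\phi^{\deg(f)}$) works as written. Note that the paper does not prove Theorem \ref{univbound} at all; it is quoted from \cite{KP}, so the comparison below is with the argument there.

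The implication ``equality $\Rightarrow$ $\mu$-minimal'' is a genuine gap, and it is the substantive half of the statement. Your contrapositive stalls exactly where you suspect, and the obstacle is not merely delicate bookkeeping: from $\inm f\mid \inm g$ with $g$ monic of degree $<\deg(f)$ you get $g\smu fh$ and $\mu(g)=\mu(f)+\mu(h)$, but $h$ cannot be normalized to be monic (only its initial term is prescribed), so $\mu(h)$ may be negative and is not tied to $\deg(h)$. Every estimate extractable from the first part is then consistent with no contradiction: writing $h=c\,h_1$ with $h_1$ monic and comparing $g$ with $fh$ through the monic polynomial $c^{-1}(fh-g)$ only returns the already known bound $\mu(h_1)\le \wt(\mu)\deg(h_1)$. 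A symptom of the same problem appears earlier: equality $\mu(f)=\wt(\mu)\deg(f)$ forces, through the $\phi$-expansion, $\mu(a_q)=\wt(\mu)\deg(a_q)$ for the top coefficient, and to conclude anything one needs the inequality of the first part to be \emph{strict} for monic polynomials of degree strictly between $0$ and $\deg(\mu)$ --- which value/degree bookkeeping alone cannot deliver.

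What is actually needed is structural input on the graded algebra $\ggm$: in \cite{KP} one proves that $\inm a$ is a unit in $\ggm$ whenever $0<\deg(a)<\deg(\mu)$ (equivalently, every homogeneous non-unit is divisible by the initial term of some key polynomial, and all key polynomials have degree at least $\deg(\mu)$), and $\mu$-minimality is then characterized through the $\phi$-expansion: equality forces $\deg(f)=q\deg(\phi)$ and $\inm f=(\inm\phi)^q+\sum_{s<q}u_s(\inm\phi)^s$ with the $u_s$ units, which cannot divide the initial term of a nonzero polynomial of degree $<q\deg(\phi)$, whose expression in powers of $\inm\phi$ stops below $q$. Primality of $\inm\phi$ and the expansion identity, which are all your argument uses, do not suffice; without an input of this kind the contraposition cannot be closed.
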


\subsection{Tangent space of a valuative tree}\label{subsecTangent}
Let $\ttt=\tla$ be the set of all valuations $$\mu\colon \kx\lra\La\infty,$$ whose restriction to $K$ is $v$.
This set admits a partial ordering. 
For $\mu,\nu\in \ttt$ we define  
$$\mu\le\nu\ \sii\ \mu(f)\le \nu(f), \quad\forall\,f\in\kx.$$
As usual, we write $\mu<\nu$ to indicate that $\mu\le\nu$ and $\mu\ne\nu$.

%If $\mu\le\nu$, there is a canonical homomorphism of graded $\gg_v$-algebras:$$\ggm\lra\ggn,\qquad \inm f\longmapsto\begin{cases}\inu f,& \mbox{ if }\mu(f)=\nu(f),\\ 0,& \mbox{  if }\mu(f)<\nu(f).\end{cases}$$

This poset $\ttt$ has the structure of a tree. By this, we simply mean that the intervals 
$$
(-\infty,\mu\,]:=\left\{\rho\in\ttt\mid \rho\le\mu\right\}
$$
are totally ordered for all $\mu\in\ttt$ \cite[Thm. 2.4]{MLV}. 

A node $\mu\in\ttt$ is a \emph{leaf} if it  is a maximal element with respect to the ordering $\le$. Otherwise, we say that $\mu$ is an \emph{inner node}.

\begin{theorem}\cite[Thm. 2.3]{MLV}\label{maximal}
	A node $\mu\in\ttt$ is a leaf if and only it $\kpm=\emptyset$.
\end{theorem}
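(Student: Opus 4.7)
The statement has two directions. The ``$\kpm\ne\emptyset$ implies $\mu$ is not a leaf'' direction is a direct augmentation construction, so I would dispatch it first; the converse, extracting a key polynomial from any strict dominator of $\mu$, is the substantive content.

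For the easy direction, I would fix $\phi\in\kpm$ and $\gamma\in\La\infty$ with $\gamma>\mu(\phi)$, then form the ordinary \mlv augmentation $\mu'=[\mu;\phi,\gamma]$ defined on $\phi$-expansions $f=\sum_s a_s\phi^s$ (with $\deg(a_s)<\deg(\phi)$) by $\mu'(f)=\min_s(\mu(a_s)+s\gamma)$. The $\mu$-irreducibility of $\phi$ is exactly what makes the resulting function multiplicative, so $\mu'$ is a valuation on $\kx$; by construction it extends $v$, dominates $\mu$, and satisfies $\mu'(\phi)=\gamma>\mu(\phi)$, hence $\mu'>\mu$.

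For the converse, suppose $\nu>\mu$ in $\ttt$ and consider the ``gap set'' $S=\{f\in\kx\mid\nu(f)>\mu(f)\}$; since $\nu|_K=\mu|_K=v$, every element of $S$ has positive degree. I would choose a monic $\phi\in S$ of minimal degree and verify $\phi\in\kpm$ by checking $\mu$-minimality and $\mu$-irreducibility separately. For $\mu$-minimality: if $\phi\mmu g$ for some $g$ of degree below $\deg(\phi)$, with witness $h$ satisfying $\mu(g-h\phi)>\mu(g)=\mu(h\phi)$, then $g\notin S$ (by minimality of $\deg(\phi)$ in $S$) gives $\nu(g)=\mu(g)$; but $\nu(\phi)>\mu(\phi)$ together with $\nu\ge\mu$ forces both $\nu(h\phi)$ and $\nu(g-h\phi)$ to strictly exceed $\nu(g)$, contradicting $g=(h\phi)+(g-h\phi)$ when valued by $\nu$. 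For $\mu$-irreducibility: given $\inm\phi\mid\inm(gh)$ with witness $e$ satisfying $\mu(gh-e\phi)>\mu(gh)$, the same jump argument applied to $gh=(e\phi)+(gh-e\phi)$ yields $\nu(gh)>\mu(gh)$, so without loss of generality $\nu(g)>\mu(g)$ and hence $g\in S$, $\deg(g)\ge\deg(\phi)$; after dividing $g=q\phi+r$ with $\deg(r)<\deg(\phi)$, the $\mu$-minimality already established combined with a short case analysis on the relative sizes of $\mu(q\phi)$ and $\mu(r)$ identifies $\inm g=\inm(q\phi)$, giving $\inm\phi\mid\inm g$.

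The principal difficulty is that final case analysis: one must rule out the ``crossing'' case $\mu(q\phi)>\mu(r)$ (where $\nu(g)>\mu(g)$ collapses since $\nu(r)=\mu(r)$ controls $\nu(g)$) and the ``tie'' case $\mu(q\phi)=\mu(r)$ (where a cancellation $\mu(g)>\mu(q\phi)$ would violate $\mu_\phi=\mu$, and absence of cancellation again collapses $\nu(g)$ to $\mu(g)$). Both the strict jump $\nu(\phi)-\mu(\phi)>0$ and the $\mu$-minimality of $\phi$ proved in the previous step are essential here; removing either ingredient breaks the argument.
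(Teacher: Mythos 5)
Your proof is correct; the paper itself does not prove this statement but cites it from \cite[Thm.~2.3]{MLV}, and the substantive converse direction — that a monic polynomial of minimal degree in the gap set $\{f\in\kx : \mu(f)<\nu(f)\}$ for any $\nu>\mu$ is a key polynomial for $\mu$ — is precisely the content of the paper's Lemma~\ref{propertiesTMN}(ii), also cited from the literature, and your jump argument reconstructs exactly that proof. One point you handle only implicitly: to conclude that $(\inm\phi)\ggm$ is prime one also needs it to be a proper ideal, but this is automatic from the $\mu$-minimality you establish first, since a unit $\inm\phi$ would $\mu$-divide every nonzero polynomial, including those of degree below $\deg(\phi)$.
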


All valuations with nontrivial support are leaves of $\ttt$. We call them \emph{finite leaves}.
The leaves of $\ttt$ having trivial support are called \emph{infinite leaves}.

A finite leaf  $\nu\in\ttt$  has $\supp(\nu)=F\kx$ for some   $F\in\irr(K)$. We define $$\deg(\nu)=\deg(F),\qquad\sval(\nu)=\wt(\nu)=\infty.
$$ 

The following result recalls some fundamental properties of tangent directions. It follows from \cite[Thm. 1.15]{Vaq},  \cite[Prop. 2.2]{MLV} and \cite[Prop. 2.5]{VT}.

\begin{lemma}\label{propertiesTMN}
	Let $\mu<\nu$ be two nodes in $\ttt$. Let $\tmn$ be the set of all monic polynomials $\phi\in\kx$ of minimal degree satisfying $\mu(\phi)<\nu(\phi)$.

\ (i) \ \ If $\nu$ is an inner node or a finite leaf, then $\,\deg(\mu)\le \deg(\nu)$ and $\,\wt(\mu)< \wt(\nu)$.

\,(ii) \,The set $\tmn$ is a tangent direction of $\mu$. %: \ $\tmn\in\tdm$.
Moreover, for all $\phi\in\tmn$, $f\in\kx$, the equality $\mu(f)=\nu(f)$ holds if and only if $\phi\nmid_{\mu}f$.   

(iii) \,If $\mu<\nu'$ for some $\nu'\in\ttt$, then
 
\qquad\qquad\qquad$\tmn=\ty(\mu,\nu')\sii (\mu,\nu\,]\cap (\mu,\nu'\,]\ne\emptyset$. 
\end{lemma}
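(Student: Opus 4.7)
For (i), my plan is to show that any $\phi$ realising the minimum degree in $\tmn$ (non-empty because $\mu<\nu$) is a key polynomial of $\mu$. The minimality of $\deg(\phi)$ forces $\mu(g)=\nu(g)$ for every nonzero $g$ of smaller degree; combining this with the fact that $\nu$ is a valuation should be enough to verify the $\mu$-minimality of $\phi$ via Lemma~\ref{minimal0} (i.e.\ $\mu_\phi=\mu$) and its $\mu$-irreducibility by a graded-algebra argument in $\ggm$. Once $\phi\in\kpm$, we have $\deg(\mu)\le\deg(\phi)$, and Theorem~\ref{univbound} applied to $\mu$ yields $\wt(\mu)=\mu(\phi)/\deg(\phi)<\nu(\phi)/\deg(\phi)\le\wt(\nu)$, the last inequality being Theorem~\ref{univbound} for $\nu$ (trivial when $\nu$ is a finite leaf, where $\wt(\nu)=\infty$). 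The bound $\deg(\phi)\le\deg(\nu)$ follows for free: any minimal-degree key polynomial $\phi_1$ of $\nu$, or the generator of $\supp\nu$ if $\nu$ is a finite leaf, satisfies $\mu(\phi_1)/\deg(\phi_1)\le\wt(\mu)<\wt(\nu)=\nu(\phi_1)/\deg(\phi_1)$, so $\phi_1$ is itself a candidate for $\tmn$.

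For (ii), by (i) we have $\tmn\subseteq\kpm$ and all its elements share the minimal degree $d:=\deg(\tmn)$ and the common $\mu$-value $\wt(\mu)\,d$. For $\phi,\phi'\in\tmn$ the equality $\mu(\phi-\phi')=\nu(\phi-\phi')\ge\min(\nu(\phi),\nu(\phi'))>\mu(\phi)$ gives $\inm\phi=\inm\phi'$; conversely, any $\psi\in\kpm$ with $\inm\psi=\inm\phi$ is monic of degree $d$, so $\mu(\phi-\psi)=\nu(\phi-\psi)>\mu(\phi)$ forces $\nu(\psi)>\mu(\psi)$ and $\psi\in\tmn$. Hence $\tmn$ is exactly one tangent direction. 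For the \emph{moreover} part, expand $f=\sum_s a_s\phi^s$ with $\deg(a_s)<d$: minimality gives $\mu(a_s)=\nu(a_s)$, and Lemma~\ref{minimal0} ($\mu_\phi=\mu$) gives $\mu(f)=\min_s\mu(a_s\phi^s)$. Each $s\ge 1$ term has strictly larger $\nu$-value than $\mu$-value, while the $s=0$ term has equal values, so the characterisation of $\phi\nmid_\mu f$ as ``$s=0$ attains the $\mu$-minimum'' yields $\nu(f)=\mu(a_0)=\mu(f)$; symmetrically, $\phi\mmu f$ forces every minimising index to be $\ge 1$, hence $\nu(f)>\mu(f)$.

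For (iii), implication $\Leftarrow$ reduces to the sub-claim $\tmn=\ty(\mu,\rho)$ whenever $\mu<\rho\le\nu$. The bound $\deg(\tmn)\le\deg(\ty(\mu,\rho))$ is immediate from $\rho\le\nu$. For the reverse, a hypothetical $\psi\in\tmn$ of strictly smaller degree would satisfy $\mu(\psi)=\rho(\psi)$ by (ii) applied to $\mu<\rho$ (using a tangent polynomial $\phi_0\in\ty(\mu,\rho)$, which cannot $\mu$-divide $\psi$ because $\phi_0$ is $\mu$-minimal and $\deg(\psi)<\deg(\phi_0)$); combined with $\mu(\psi)<\nu(\psi)$ this places $\psi$ as a candidate for $\ty(\rho,\nu)$, giving $\deg(\psi)\ge\deg(\rho)$ by (i) applied to $\rho<\nu$, while the MLV-chain structure carrying $\mu$ to $\rho$ forces $\deg(\rho)\ge\deg(\phi_0)=\deg(\ty(\mu,\rho))$, a contradiction. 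Equal degrees then give $\tmn=\ty(\mu,\rho)$ via the $\mu$-equivalence argument of (ii). Chaining through any $\rho\in(\mu,\nu]\cap(\mu,\nu']$ yields $\tmn=\ty(\mu,\rho)=\ty(\mu,\nu')$. For $\Rightarrow$, picking $\phi$ in the common direction makes it a key polynomial of $\mu$ by (i), so $\rho:=[\mu;\phi,\alpha]$ with $\alpha:=\min(\nu(\phi),\nu'(\phi))>\mu(\phi)$ is a well-defined augmented valuation, and the $\phi$-adic calculation of (ii) verifies $\rho\le\nu$ and $\rho\le\nu'$. The main obstacle throughout is the step in (i) of showing that a minimal-degree element of $\tmn$ is a key polynomial of $\mu$; once this is secured, the rest of the lemma follows from routine $\phi$-adic expansions combined with the weight bound of Theorem~\ref{univbound}.
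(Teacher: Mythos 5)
The first thing to say is that the paper does not actually prove this lemma: part (i) is declared obvious and parts (ii), (iii) are imported from \cite[Thm. 1.15]{Vaq}, \cite[Prop. 2.2]{MLV} and \cite[Prop. 2.5]{VT}, so there is no internal proof to match and your proposal amounts to reproving the quoted results from scratch. The portions you write out in full are correct: the scaling argument giving $\mu(g)=\nu(g)$ in all degrees below $\deg\,\tmn$; the deduction of $\deg(\mu)\le\deg(\nu)$ and $\wt(\mu)<\wt(\nu)$ from Theorem \ref{univbound} once the elements of $\tmn$ are known to be key polynomials; the $\mu$-equivalence computations identifying $\tmn$ with a single tangent direction; and both implications of (iii), via the augmentation $[\mu;\,\phi,\min(\nu(\phi),\nu'(\phi))]$ in one direction and the degree comparison in the other.

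The genuine gap is that the pivotal step is only promised, not proved: that a monic $\phi$ of minimal degree with $\mu(\phi)<\nu(\phi)$ is both $\mu$-minimal and $\mu$-irreducible. Saying that Lemma \ref{minimal0} ``should be enough'' and invoking ``a graded-algebra argument in $\ggm$'' is precisely the content of the cited theorems, and it must be carried out; it can be, along your lines: if $\phi\mmu f$, choose $g$ with $f\smu\phi g$ and estimate $\nu(f)\ge\min\{\nu(f-\phi g),\,\nu(\phi)+\nu(g)\}>\mu(f)$, so $\phi$ cannot $\mu$-divide a nonzero polynomial of degree $<\deg(\phi)$ (where $\mu=\nu$), which gives $\mu$-minimality; the \emph{moreover} equivalence then follows from your $\phi$-expansion computation, noting that the ``characterisation of $\phi\nmid_\mu f$ as the minimum being attained at $s=0$'' is not among the facts recalled in the paper, but the implication you actually need, $\phi\nmid_\mu f\Rightarrow\mu(a_0)=\mu(f)$, is just the contrapositive of the trivial direction $\mu(a_0)>\mu(f)\Rightarrow f\smu\phi g$; finally $\mu$-irreducibility follows because $\{f\mid\mu(f)=\nu(f)\}$ is closed under products while $\mu(1)=\nu(1)$, so $\inm\phi\,\ggm$ is a proper prime ideal. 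Two repairs are also needed in (iii): the bound $\deg(\psi)\ge\deg(\rho)$ should come from (ii) applied to $\rho<\nu$ (elements of $\ty(\rho,\nu)$ are key polynomials for $\rho$, hence of degree $\ge\deg(\rho)$), not from (i), which is unavailable when $\nu$ is an infinite leaf and $\deg(\nu),\wt(\nu)$ are undefined; and $\deg(\rho)\ge\deg\,\ty(\mu,\rho)$ is not a vague consequence of ``MLV-chain structure'': in the nontrivial case $\rho$ is an inner node, so $\rho(\phi_0)<\infty$, and one proves it by observing $[\mu;\,\phi_0,\rho(\phi_0)]\le\rho$, which by \cite[Cor. 7.3]{KP} has degree $\deg(\phi_0)$, and then applying part (i).
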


\defn The \emph{tangent space} of $\ttt$ is the set
$$
\T(\ttt)=\left\{(\mu,t)\mid \mu\mbox{ inner node in }\ttt,\ t\in\tdm\right\}.
$$
%We denote by $\T\str(\ttt)\subset \T(\ttt)$ the subset of all pairs $(\mu,t)\in\T(\ttt)$ with  $t$ strong. 

\subsection{\mlv chains}\label{subsecMLV}

%\subsection{Augmentation of valuations}\label{subsecAugm}

For any $\phi\in\kpm$ and any $\ga\in\Lambda\infty$ such that $\mu(\phi)<\ga$, we can construct the \emph{ordinary augmented valuation}  $\nu=[\mu;\,\phi,\ga]\in\ttt$,
defined in terms of $\phi$-expansions as
$$
 f=\sum\nolimits_{0\le s}a_s\phi^s,\quad \deg(a_s)<\deg(\phi)\ \imp\ \nu(f)=\min\nolimits_{0\le s}\{\mu(a_s)+s\ga\}.
$$
Note that $\nu(\phi)=\ga$, $\mu<\nu$ and $\ty(\mu,\nu)=\cl{\phi}$. %For any $f\in\kx$ we have$$\mu(f)=\nu(f)\sii \phi\nmid_\mu f.$$

If $\ga<\infty$, then $\phi$ is a key polynomial for $\nu$ of minimal degree \cite[Cor. 7.3]{KP}. 

Let $\aa=\cfa\subset\ttt$ be a totally ordered family 
not admitting a maximal element. Assume that $\aa$ is
parametrized by a totally ordered set $A$ of indices such that  the mapping
$A\to\aa$ determined by $i\mapsto \ri$
is an isomorphism of ordered sets.

If $\deg(\rho_{i})$ is stable for all sufficiently large $i\in A$, we say that $\aa$ has \emph{stable degree}, and we denote this stable degree by $\dg(\aa)$. 

We say that $f\in\kx$ is \emph{$\aa$-stable} if for some index $i\in A$, it satisfies
$$\ri(f)=\rj(f), \quad \mbox{ for all }\ j>i.$$
We obtain a \emph{stability function} $\rha$, defined
on the set of all $\aa$-stable polynomials by $\rha(f)=\max\{\ri(f)\mid i\in A\}$.

A \emph{limit key polynomial} for $\aa$ is a monic $\aa$-unstable polynomial of minimal degree. Let $\kpi(\aa)$ be the set of all these limit key polynomials. Since the product of stable polynomials is stable, all limit key polynomials are irreducible in $ \kx$.

The \emph{limit degree} of $\aa$, denoted  $\dgi(\aa)$,  is the degree of any limit key polynomial. If $\kpi(\aa)=\emptyset$, we agree that $\dgi(\aa)=\infty$. \e

\defn
We say that $\aa$ is an \emph{essential continuous family} of valuations in $\ttt$ if it has stable degree and  $\dg(\aa)<\dgi(\aa)<\infty$.\e

Take any limit key polynomial $\phi\in\kpi\left(\aa\right)$, and any $\ga\in\La\infty$ such that $\ri(\phi)<\ga$ for all $i\in A$.
We define the \emph{limit augmentation}   $\nu=[\aa;\,\phi,\ga]\in\ttt$ as the following mapping, defined in terms of $\phi$-expansions: 
$$ f=\sum\nolimits_{0\le s}a_s\phi^s\quad \deg(a_s)<\deg(\phi)\ \imp\  \nu(f)=\min\nolimits_{0\le s}\{\rha(a_s)+s\ga\}.
$$
Since $\deg(a_s)<\dgi(\aa)$, all coefficients $a_s$ are $\aa$-stable. 
 
Note that $\nu(\phi)=\ga$ and $\ri<\nu$ for all $i\in A$.
If $\ga<\infty$, then $\phi$ is a key polynomial for $\nu$ of minimal degree \cite[Cor. 7.13]{KP}. 

Consider a finite chain of valuations in $\ttt$
$$%\begin{equation}\label{depthMLV}
\mu_0\ \stackrel{\phi_1,\ga_1}\lra\  \mu_1\ \stackrel{\phi_2,\ga_2}\lra\ \cdots
\ \lra\ \mu_{r-1} 
\ \stackrel{\phi_{r},\ga_{r}}\lra\ \mu_{r}=\nu
$$%\end{equation}
in which every node is an augmentation  of the previous node, of one of the two types:\e

\emph{Ordinary augmentation}: \ $\mu_{n+1}=[\mu_n;\, \phi_{n+1},\ga_{n+1}]$, for some $\phi_{n+1}\in\kp(\mu_n)$.\e

\emph{Limit augmentation}:  \ $\mu_{n+1}=[\aa_n;\, \phi_{n+1},\ga_{n+1}]$,  for some $\phi_{n+1}\in\kpi(\aa_n)$, where $\aa_n$ is an essential continuous family containing $\mu_n$ as its first valuation.\e

We always consider an implicit choice of a key  polynomial $\phi_0\in\kp(\mu_0)$ of minimal degree, and we denote $\ga_0=\mu_0(\phi_0)$.

%All values $\ga_n$ belong to $\La\infty$ and satisfy $\ga_n=\sval(\mu_n)<\ga_{n+1}$. 

Therefore, for all $n$ such that $\ga_n<\infty$,  the polynomial $\phi_n$ is a key polynomial for $\mu_n$ of minimal degree, and we have
$$%\begin{equation}\label{mgan}
m_n:=\deg(\mu_n)=\deg(\phi_n),\qquad \sval(\mu_n)=\mu_n(\phi_n)=\ga_n.
$$%\end{equation}\vskip0.2cm

\defn
A chain of mixed augmentations as above is said to be  a \emph{\mlv (MLV) chain}  if $\deg(\mu_0)=1$ and every augmentation step satisfies:
\begin{itemize}
	\item If $\,\mu_n\to\mu_{n+1}\,$ is ordinary, then $\ \deg(\mu_n)<\deg(\mu_{n+1})$.
	\item If $\,\mu_n\to\mu_{n+1}\,$ is limit, then $\ \deg(\mu_n)=\deg(\aa_n)$ and $\ \phi_n\not \in\ty(\mu_n,\mu_{n+1})$. 
\end{itemize}\e

In an MLV chain, all nodes $\mu_n$, with $n<r$, are residue transcendental valuations and satisfy $\phi_n\not \in\ty(\mu_n,\mu_{n+1})$. In particular, $\nu(\phi_n)=\ga_n$ for all $n$, by Lemma \ref{propertiesTMN}.

\begin{theorem}\cite[Thm. 4.3]{MLV}\label{main}
Let $\nu\in\ttt$ be an inner node or a finite leaf. Then, $\nu$ is the end node of a finite MLV chain.
\end{theorem}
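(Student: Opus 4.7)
I would argue by induction on $m := \deg(\nu)$, setting $\deg(\nu) := \deg(F)$ when $\supp(\nu) = F\kx$. The base case $m = 1$ is essentially trivial: one takes $r = 0$ and $\mu_0 = \nu$, which is already of degree $1$ (the MLV definition only requires the implicit choice of a minimal-degree key polynomial when $r > 0$). For the inductive step, given $\nu$, I would produce a predecessor $\mu_{r-1} < \nu$ with $\deg(\mu_{r-1}) < m$ together with an augmentation $\mu_{r-1} \to \nu$ (ordinary or limit) satisfying the MLV conditions; applying the induction hypothesis to $\mu_{r-1}$ (which has $\kp(\mu_{r-1}) \ne \emptyset$ since it admits the augmentation, hence it is an inner node by Theorem \ref{maximal}) then gives an MLV chain $\mu_0 \to \cdots \to \mu_{r-1}$, and prepending it to the new step yields the chain to $\nu$. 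Finiteness is automatic because both types of augmentations strictly increase the degree in an MLV chain (ordinary by its defining condition; limit because $\dgi(\aa) > \dg(\aa)$ for an essential continuous family), so $r \le m$.

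\textbf{Finding the predecessor.} The interval $(-\infty, \nu) \subset \ttt$ is totally ordered, and by Lemma \ref{propertiesTMN}(i) the degree function is non-decreasing on it. Set $I^- := \{\mu < \nu : \deg(\mu) < m\}$, a non-empty down-closed sub-interval. \emph{Case 1 (ordinary):} $I^-$ admits a maximum $\mu^*$. By Lemma \ref{propertiesTMN}(ii)--(iii), $t := \ty(\mu^*, \nu)$ is a tangent direction of $\mu^*$; pick $\phi \in t$ of minimal degree, so $\phi \in \kp(\mu^*)$. Maximality of $\mu^*$ forces $\deg(\phi) \ge m$, and a comparison of $\phi$-expansions together with the universal bound Theorem \ref{univbound} shows $\nu = [\mu^*; \phi, \nu(\phi)]$. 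One then sets $\mu_{r-1} := \mu^*$ and $\phi_r := \phi$. \emph{Case 2 (limit):} $I^-$ has no maximum. Parametrize it as a totally ordered family $\aa = (\rho_i)_{i \in A}$; the degrees stabilize at some $m^* < m$ (bounded and non-decreasing), so $\dg(\aa) = m^*$. Choose $\mu_{r-1} \in I^-$ with $\deg(\mu_{r-1}) = m^*$ and replace $\aa$ by its tail from $\mu_{r-1}$ onwards. Not every polynomial can be $\aa$-stable: otherwise $\rha$ would extend to a valuation strictly above every $\rho_i$, with $\deg(\rha) = m^*$ and $\rha \le \nu$ (since all $\rho_i \le \nu$), placing $\rha$ in $I^-$ and contradicting the lack of maximum. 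Hence $\kpi(\aa) \ne \emptyset$; taking $\phi$ of minimal unstable degree gives $\dgi(\aa) > \dg(\aa)$, so $\aa$ is essential. One then verifies $[\aa; \phi, \nu(\phi)] = \nu$ by comparing values on $\phi$-expansions, whose coefficients of degree $< \deg(\phi)$ are $\aa$-stable with matching $\rha$- and $\nu$-values.

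\textbf{Main obstacle.} The technical heart is the limit case. One must show carefully that if all polynomials were $\aa$-stable then $\rha$ would define a valuation in $I^-$ contradicting the absence of a maximum, and that the limit augmentation via $\phi$ recovers $\nu$ exactly — this requires a careful bookkeeping of stable versus unstable behavior of the $\phi$-expansion coefficients, combined with the cofinality of $\aa$-values in $\nu$-values on stable polynomials. In addition, the MLV compatibility condition $\phi_{r-1} \notin \ty(\mu_{r-1}, \nu)$ (where $\phi_{r-1}$ is the implicit minimal-degree key polynomial of $\mu_{r-1}$ coming from the previous augmentation step) may force one to advance $\mu_{r-1}$ further along $\aa$ past the tangent-direction switch; such a switch must occur, since if $\phi_{r-1}$ lay in $\ty(\mu_{r-1}, \nu)$ for every $\mu_{r-1}$ in the tail, then $\phi_{r-1}$ would be $\aa$-unstable of degree $m^* = \dg(\aa)$, contradicting $\dgi(\aa) > \dg(\aa)$. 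By contrast, the ordinary case is a direct consequence of the definitions and Theorem \ref{univbound} once the maximum $\mu^*$ has been identified.
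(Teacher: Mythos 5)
The paper offers no argument for this statement: it is quoted from \cite[Thm.\ 4.3]{MLV}, so your sketch has to stand on its own. Your overall strategy (induction on $\deg(\nu)$, with the dichotomy governed by whether the set $I^-$ of strictly-smaller-degree predecessors has a maximum) is reasonable and does correspond to the ordinary/limit alternative, but two of the pivotal steps fail as written. In Case 1, the identity $\nu=[\mu^*;\phi,\nu(\phi)]$ is simply false for an arbitrary $\phi\in\ty(\mu^*,\nu)$ (``of minimal degree'' selects nothing, since all members of a tangent direction have the same degree). Concretely, over $\Q$ with the $2$-adic valuation let $\mu_0$ be the Gauss valuation, $\phi_1=x^2+x+1$, $\mu_1=[\mu_0;\phi_1,1]$ and $\nu=[\mu_1;\phi_1+2,\,2]$; then $\mu^*=\mu_0$, both $\phi_1$ and $\phi_1+2$ lie in $\ty(\mu_0,\nu)$, and the choice $\phi=\phi_1$ gives $[\mu_0;\phi_1,\nu(\phi_1)]=[\mu_0;\phi_1,1]=\mu_1<\nu$. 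One must take $\phi$ with $\nu(\phi)$ \emph{maximal} in the class (a key polynomial of $\nu$ of minimal degree, or the generator of $\supp(\nu)$), prove that this maximum is attained --- which follows from Theorem~\ref{univbound} applied to $\nu$, and is exactly where the hypothesis $\kpn\ne\emptyset$ or $\supp(\nu)\ne 0$ enters --- and then use that maximality in the $\phi$-expansion comparison. None of this is in your text.

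In Case 2 the assertion that, if every polynomial were $\aa$-stable, the stable limit $\rha$ would be a valuation ``with $\deg(\rha)=m^*$'' is unjustified and in fact wrong: such a limit has \emph{no} key polynomials at all, because for a minimal-degree $\psi\in\kp(\rha)$ stability gives $\wt(\rha)=\rha(\psi)/\deg(\psi)=\rho_j(\psi)/\deg(\psi)\le\wt(\rho_j)<\wt(\rha)$ for large $j$, by Theorem~\ref{univbound} and Lemma~\ref{propertiesTMN}\,(i). Hence $\rha$ is a leaf by Theorem~\ref{maximal} and cannot be ``placed in $I^-$''; the contradiction you want is instead that a leaf would sit strictly below $\nu$, where strictness again uses the hypothesis on $\nu$. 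Beyond this, the genuinely hard points are acknowledged but not carried out: that the minimal $\aa$-unstable degree equals $\deg(\nu)$ (rather than some intermediate degree), that the stable values of the expansion coefficients agree with their $\nu$-values so that the limit augmentation recovers $\nu$ exactly, and the repair of the condition $\phi_{r-1}\not\in\ty(\mu_{r-1},\nu)$ --- your argument there treats $\phi_{r-1}$ as one fixed polynomial along the whole tail, but the last key polynomial of the chain changes each time you advance $\mu_{r-1}$, so ``$\phi_{r-1}$ would be $\aa$-unstable'' does not parse. These missing steps are precisely the substantive content of the proof in \cite{MLV}, so the proposal is a plausible plan rather than a proof.
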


These valuations are ``bien-specifi\'ees" in Vaqui\'e's terminology.

The main advantage of imposing the technical condition of MLV chain is that the nodes of the chain are essentially unique \cite[Thm. 4.7]{MLV}. Thus, we may read in the chain several data intrinsically associated to the valuation $\nu$.\e

\defn
The \emph{depth} of $\nu$ is the length of any MLV chain with end node  $\nu$. The \emph{limit-depth} of $\nu$ is the number of limit augmentations in this MLV chain.

An inner node or a finite leaf $\nu\in\ttt$ is said to be an \emph{inductive} valuation if it has limit-depth equal to zero.\e

The depth-zero valuations take the form $\nu=\om_{a,\dta}$, for some $a\in K$ and $\dta\in \La\infty$. They act as follows on $(x-a)$-expansions: 
$$
\nu\left(\sum\nolimits_{0\le s}a_s(x-a)^s\right) = \min\left\{v(a_s)+s\dta\mid0\le s\right\}.
$$

Clearly, $\om_{a,\infty}$ is a finite leave of $\ttt$ with support $(x-a)\kx$, while for $\dta<\infty$ the valuation $\om_{a,\dta}$ is an inner node admitting $x-a$ as a key polynomial. 
The valuation $\om_{a,\dta}$ is commensurable if and only if $\dta\in\gq\infty$. We clearly have
\begin{equation}\label{balls}
\om_{a,\dta}\le\om_{b,\ep}\ \sii\ \dta\le\min\{v(b-a),\ep\}.
\end{equation}

\subsection{Parametrization of equivalence classes of extensions of $v$ to $\kx$}
\label{subsecPrimitive}

%Let us first recall some properties of the {\bf small extensions hull} of $\g$, constructed in \cite{csme}.

Let $\g\hk \La$ be an extension  of ordered abelian groups, and let $\Delta\subset \La$ be the relative divisible hull of $\g$ in $\La$. We say that the extension  $\g\hk \La$ is {\it small} if $\La/\Delta$ is a cyclic group. 
For instance, for every valuation $\mu$ extending $v$ to $\kx$, the extension $\g\hk\gm$ is small \cite[Theorem 1.5]{Kuhl}. 

In \cite{csme}, a totally ordered real vector space $\rsme$ was constructed, which contains all small extensions of $\g$ up to $\g$-equivalence. This object is canonical, depending only on the set of nonzero principal convex subgroups of $\g$. However, the order-preserving embeddings $\g\hk\rsme$ are non-canonical, because they are obtained as an application of Hahn's embedding theorem \cite[Section 4]{csme}.

From now on, we fix any such embedding $\g\hk\rsme$ and we identify $\g$ with its image in $\rsme$. By the universal property of $\rsme$, for any small extension $\g\hk \La$, there exists an embedding $\La\hk\rsme$ fitting into a commutative diagram
$$
\begin{array}{ccc}
\La&&\\
\uparrow&\searrow&\\
\g&\hra&\rsme.
\end{array}
$$
In particular, every extension of $v$ to $\kx$ is equivalent to some extension of $v$ taking values in $\rsme$.

Consider two elements $x,y\in\rsme$ to be  equivalent if there exists an isomorphism of ordered groups between the subgroups $\gen{\g,x}$ and $\gen{\g,y}$, which maps $x$ to $y$ and acts as the identity on $\g$. %In this case, we write $x\sim_\sme y$.
There is a canonical set of representatives $\gsme\subset \rsme$ of this equivalence relation. We have 
$\g\subset\gq\subset\gsme\subset\rsme$. 

For each $x\in\gsme$ let us denote
\[
\left(\gq\right)_{\le x}=\left\{\ga\in\gq\mid \ga\le x\right\},\qquad
\left(\gq\right)_{\ge x}=\left\{\ga\in\gq\mid \ga\ge x\right\}
\]

The set $\gsme$ satisfies the following property, easily deduced from \cite[Lemma 5.4]{csme}.

\begin{proposition}\label{qcuts}
	The mapping $x\mapsto \left(\left(\gq\right)_{\le x},\left(\gq\right)_{\ge x}\right)$ establishes an order-preser\-ving isomorphism between $\gsme$ and the set of all quasicuts in $\gq$.  
\end{proposition}

A {\it quasicut} in $\gq$ is a pair of subsets $D=(D^L,D^R)$ such that $D^L\le D^R$ (every element in $D^L$ is less than or equal to every element  in $D^R$), $D^L\cup D^R=\gq$ and $D^L\cap D^R$ contains at most one element. The set of quasicuts admits a total ordering:
\[
\left(D^L,D^R\right)\le \left(E^L,E^R\right) \ \sii\ D^L\subset E^L\quad\mbox{and}\quad D^R\supset E^R.
\]

%For instance, every $\ga\in\gq$ admits an immediate predecessor and an immediate successor in $\gsme$, denoted as \,$\ga^-<\ga<\ga^+$. As quasicuts in $\gq$, these three elements are identified with:
%\[\ga^-=\left(\left(\gq\right)_{<\ga},\left(\gq\right)_{\ge\ga}\right),\qquad \ga=\left(\left(\gq\right)_{\le\ga},\left(\gq\right)_{\ge\ga}\right),\qquad \ga^-=\left(\left(\gq\right)_{\le\ga},\left(\gq\right)_{>\ga}\right).\] 

%Under this reinterpretation of $\gsme$, the embedding $\gq\hk\gsme$ maps every $\ga\in\gq$ to the quasicut $\left(\left(\gq\right)_{\le \ga},\left(\gq\right)_{\ge\ga}\right)$.

The following result follows immediately from Proposition \ref{qcuts}.

\begin{corollary}\label{sup}
Every subset $S\subset\gq$ admits a supremum in $\gsme$, which we simply denote by $\,\sup(S)$. If $S$ does not contain a maximal element, then $\sup(S)\not\in\gq$. 
\end{corollary}

Indeed, if $S$ contains a maximal element $\ga$, then $\sup(S)=\ga\in\gq$. Otherwise, $\sup(S)$ is the cut $(I,\gq\setminus I)$, where $I$ is the initial segment of $\gq$ generated by $S$.
Since $I$ contains no maximal element, necessarily $I\ne\left(\gq\right)_{\le \ga}$ for all $\ga\in\gq$.

%The element $\infty^-$ is the immediate predecessor of $\infty$ in $\gsme\infty$.

Denote $\tq=\ttt(\gq)$. Consider the intermediate tree $\tq\subset\ts\subset\ttt(\rsme)$, defined as follows
$$\ts=\tq\cup\left\{\rho\in\ttt(\rsme)\mid \rho\mbox{ inner node with }\sval(\rho)\in\gsme\right\}.$$
Note that $\tq$ and $\ts$ have the same finite leaves.

The nodes of $\ts$ parametrize the equivalence classes of valuations on $\kx$ whose restriction to $K$ is equivalent to $v$ \cite[Thm. 7.1]{VT}. We need to recall some particular properties of the tree $\ts$ which will be useful in the sequel.

%$$-\infty=\min(\gsme),\qquad \infty^-=\max(\gsme).$$Note that $\infty^-$ is the immediate predecessor of $\infty$ in $\gsme\infty$. 

\subsubsection{Inner depth-zero nodes}

The inner depth-zero nodes in $\ts$ are of the form $\om_{a,\ga}$ for $a\in K$ and $\ga\in\gsme$.

Let  $-\infty=\min(\gsme)$ be the absolute minimal element in $\gsme$, corresponding to the quasicut $(\emptyset,\gq)$.
By (\ref{balls}), we have
\begin{equation}\label{dos}
\om_{a,-\infty}=\om_{b,-\infty}\le\om_{c,\ga} \quad\mbox{ for all }\ a,b,c\in K,\ \ga\in \gsme.
\end{equation}

\defn
Let us denote by $\minf=\om_{a,-\infty}$ this minimal depth-zero valuation, which is independent of $a\in K$.
By Theorem \ref{main} and (\ref{dos}), $\minf$ is an absolute minimal node of $\ts$. We say that $\minf$ is the \emph{root node} of $\ts$. \e

Since $\tq$ has no minimal node, the root node $\minf$ must be incommensurable. Hence, it has a unique tangent direction; actually, $$\kp(\minf)=\{x-a\mid a\in K\}=[x]_{\minf}.$$

All inner depth-zero nodes in $\ts$ are obtained as a single ordinary augmentation of the root node:
$$
\om_{a,\ga}=[\minf;\,x-a,\ga]\quad\mbox{ for all }a\in K,\ \ga\in\gsme,\ \ga>-\infty.
$$

\subsubsection{Limit augmentations}

Let $\aa=(\rho_i)_{i\in A}$ be an essential continuous family  in $\tq$, and let $\phi\in\kpi(\aa)$ be a limit key polynomial. %Let $\mu\in\aa$ be the first valuation in the family. 
By Corollary \ref{sup}, there exists a minimal limit augmentation of $\aa$ in $\ts$ with respect to $\phi$; namely
$$
\mu_\aa:=[\aa;\,\phi,\ga_\aa],\qquad \ga_\aa:=\sup\left\{\rho_i(\phi)\mid i\in A\right\}\in\gsme.
$$
Since $\aa$ has no maximal element, the family $\{\rho_i(\phi)\mid i \in A\}$ has no maximal element either. By Corollary \ref{sup}, $\ga_\aa$ does not belong to $\gq$. 

%The following result follows immediately from  \cite{VT}.

\begin{lemma}\cite[Lem. 7.2]{VT}\label{muaa}
The value $\ga_\aa\in\gsme\setminus \gq$ and the valuation $\mu_\aa\in\ts$ are independent of the choice of the limit key polynomial $\phi$ in $\kpi(\aa)$.
\end{lemma}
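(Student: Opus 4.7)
Fix two limit key polynomials $\phi,\phi'\in\kpi(\aa)$ of the common degree $\dgi(\aa)$, and set $r:=\phi'-\phi$. Since $\phi,\phi'$ are monic of the same degree, $\deg(r)<\dgi(\aa)$, so $r$ is $\aa$-stable; denote $\beta:=\rha(r)\in\gq\infty$. The plan is to exploit this single observation twice: first to identify the two supremums $\sup_i\rho_i(\phi)=\sup_i\rho_i(\phi')$, and then to identify the two candidate augmentations.

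\textbf{Step 1 (equality of the suprema).} I would first show $\beta\ge\sup_i\rho_i(\phi)$. Otherwise, for all $i$ large enough one has $\rho_i(r)=\beta<\rho_i(\phi)$, and then $\rho_i(\phi')=\min\{\rho_i(\phi),\rho_i(r)\}=\beta$ would be eventually constant, contradicting the $\aa$-instability of $\phi'$. By symmetry, $\beta\ge\sup_i\rho_i(\phi')$. Now, for $i$ large enough both $\rho_i(\phi)<\beta$ and $\rho_i(\phi')<\beta$ hold, so $\rho_i(\phi')=\rho_i(\phi+r)=\rho_i(\phi)$ for such $i$, whence $\sup_i\rho_i(\phi)=\sup_i\rho_i(\phi')$. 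Call this common element of $\gsme$ simply $\ga_\aa$; this is the first assertion.

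\textbf{Step 2 (equality of the valuations).} Write $\mu:=[\aa;\phi,\ga_\aa]$ and $\mu':=[\aa;\phi',\ga_\aa]$. Both valuations agree with $\rha$ on any polynomial of degree $<\dgi(\aa)$, and by construction $\mu(\phi)=\mu'(\phi')=\ga_\aa$. I would first establish the crossover identity $\mu(\phi')=\ga_\aa$. The lower bound is automatic from $\mu\ge\rho_i$ for all $i$, since $\mu(\phi')\ge\sup_i\rho_i(\phi')=\ga_\aa$. For the upper bound, if $\beta>\ga_\aa$ then the $\phi$-expansion $\phi'=\phi+r$ gives $\mu(\phi')=\min(\ga_\aa,\beta)=\ga_\aa$ with no possible cancellation, while if $\beta=\ga_\aa$ one has to rule out the existence of nontrivial cancellation between $\inm(\phi)$ and $\inm(r)$ in $\ggm$; this forces $\phi$ to $\mu$-divide $r$, which is incompatible with the $\mu$-minimality of $\phi$ since $\deg(r)<\deg(\phi)=\deg(\mu)$. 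Once $\mu(\phi')=\ga_\aa$ is known, the monic polynomial $\phi'$, having degree $\deg(\mu)=\dgi(\aa)$ and attaining the weight $\wt(\mu)=\ga_\aa/\dgi(\aa)$, is itself a key polynomial for $\mu$ of minimal degree by Theorem \ref{univbound} together with Lemma \ref{minimal0}. Hence $\mu$ is its own truncation $\mu_{\phi'}$, so that for every $\phi'$-expansion $f=\sum_s b_s(\phi')^s$ with $\deg(b_s)<\dgi(\aa)$ one has
$$
\mu(f)=\mu_{\phi'}(f)=\min_s\bigl(\mu(b_s)+s\,\mu(\phi')\bigr)=\min_s\bigl(\rha(b_s)+s\,\ga_\aa\bigr)=\mu'(f),
$$
which is exactly the definition of $\mu'(f)$.

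\textbf{Main obstacle.} The delicate point is the critical sub-case $\beta=\ga_\aa$ in the proof of $\mu(\phi')=\ga_\aa$: here one must exclude cancellation in the graded algebra $\ggm$, and the natural way I see is to leverage the $\mu$-irreducibility and $\mu$-minimality of $\phi$ (so that $\inm(\phi)$ is prime and does not divide $\inm(r)$, whose preimage has smaller degree than $\phi$). The rest of the argument is then standard manipulation with key-polynomial expansions, but checking this sub-case carefully is where one really uses that $\phi'$ belongs to $\kpi(\aa)$ rather than being an arbitrary perturbation of $\phi$.
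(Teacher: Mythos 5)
Your argument is correct and self-contained. Note that the paper does not reproduce a proof of this lemma: it is quoted from \cite[Lem.\ 7.2]{VT}, so there is no internal proof to compare against; your route (stability of $r=\phi'-\phi$, equality of the two suprema, then the crossover value $\mu(\phi')=\ga_\aa$ and Lemma \ref{minimal0} plus Theorem \ref{univbound} to identify $\mu'$ with the truncation of $\mu$ by $\phi'$) is exactly the kind of expansion-comparison argument the cited source uses, and every step checks out: $\deg(r)<\dgi(\aa)$ does force $\aa$-stability of $r$, the instability of $\phi'$ does force $\beta=\rha(r)\ge\sup_i\rho_i(\phi)$, and the eventual equality $\rho_i(\phi')=\rho_i(\phi)$ gives the common supremum. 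The one point where you do genuinely more work than necessary is the sub-case $\beta=\ga_\aa$: this case is in fact vacuous, because $\beta\in\gq$ while $\ga_\aa$ is incommensurable (a fact the paper records right after the lemma, again from \cite[Sec.\ 7.3]{VT}); but since that incommensurability is not available to you at this stage without risking circularity, your direct exclusion of cancellation via $\mu$-irreducibility and $\mu$-minimality of $\phi$ (using $\deg(r)<\deg(\phi)=\deg(\mu)$, legitimate since $\phi\in\kp(\mu_\aa)$ of minimal degree by \cite[Cor.\ 7.13]{KP}) is the right move and is sound. In short: correct proof, essentially the expected approach, with a carefully handled extra sub-case that keeps it independent of later statements.
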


Since $\mu_\aa$ is incommensurable, it has a unique tangent direction. Actually,
$$\kp(\mu_\aa)=[\phi]_{\mu_\aa}= \left\{\phi+a\mid a\in\kx,\ \deg(a)<\deg(\phi), \ \rho_\aa(a)>\ga_\aa \right\}=\kpi(\aa).$$

Also, all limit augmentations $[\aa;\,\phi,\ga]$ are ordinary augmentations of $\mu_\aa$:
$$[\aa;\,\phi,\ga]=[\mu_\aa;\,\phi,\ga]\quad \mbox{ for all }\ga\in\gsme,\ \ga>\ga_\aa.$$

\subsubsection{Primitive nodes of $\ts$}

For the ease of the reader we shall consider the depth-zero valuations as a special case of limit augmentations.\e

\nn{\bf Convention. }We admit the empty set $\aa=\emptyset$ as an essential continuous family in $\tq$. We agree that this family has
$$
\ga_\aa=-\infty,\quad \mu_\aa=\minf,\quad \kpi(\aa)=\kp(\mu_\aa)=\left\{x-a\mid a\in K\right\}.
$$

Consider the subset $\kpmz\subset\kpm$ of \emph{strong} key polynomials, defined as
$$ 
\kpmz=\left\{\phi\in\kpm\mid \deg(\phi)>\deg(\mu)\right\}. 
$$
If $\kpmz\ne\emptyset$, then $\mu$ admits more than one tangent direction; thus, it is necessarily a residue transcendental valuation. In particular, $\sval(\mu)\in\gq$ and $\mu\in\tq$.\e 
%In other words, we consider the root node $\minf$ as the inner limit node of the empty essential continuous family. \bs

%In the next section, we find a first instance where this convention is useful.

\defn A \emph{limit-primitive} node in $\ts$ is the inner limit node $\mu_\aa$ associated to an essential continuous family $\aa$ in $\tq$. 

An \emph{ordinary-primitive\,} node in $\ts$ is an inner node $\mu\in\tq$ such that $\kpmz\ne\emptyset$. 

A \emph{primitive} node in $\ts$ is a node which is either limit-primitive or ordinary-primitive. Let us denote by  $\prim\left(\ts\right)$ the set of all primitive nodes. \e

%as $$\prim(\ts)=\primo(\ts)\,\sqcup\, \priml(\ts),$$where we distinguish the subsets of ordinary-primitive and limit-primitive nodes.\bs

%By our convention, the root node $\minf$ is a limit-primitive node.

For any inner node $\mu\in\ts$ and any $\phi\in\kpm$, consider the set of all ordinary augmentations of $\mu$ with respect to $\phi$:
$$
\pmph=\left\{[\mu;\,\phi,\ga]\mid \ga\in\gsme\infty,\ \ \mu(\phi)<\ga\le\infty\right\}\subset\ts.
$$

\defn Let $\rho\in \ts$ be a primitive node. Then, we define
$$
\prh=\begin{cases}
\bigcup_{\phi\in\kp_{\op{str}}(\rho)}\pset_\rho(\phi),&\mbox{ if $\rho$ is ordinary-primitive},\\\{\rho\}\cup
\bigcup_{\phi\in\kp(\rho)}\pset_\rho(\phi),&\mbox{ if $\rho$ is limit-primitive}.
\end{cases}
$$
 
\begin{theorem}\cite[Thm. 7.3]{VT}\label{previous}
Let $\nu\in\ts$ be either an inner node, or a finite leaf. There exists a unique primitive node $\rho\in\prim(\ts)$ such that $\nu\in\prh$.
\end{theorem}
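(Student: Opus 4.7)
The plan is to establish existence and uniqueness separately, reducing both parts to the structure of \mlv chains given by Theorem \ref{main} and the essential uniqueness of such chains (\cite[Thm.\,4.7]{MLV}).

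For existence, the depth-zero case is handled by the convention treating $\aa = \emptyset$ as a limit family: if $\nu = \om_{a,\ga}$ with $\ga > -\infty$, then $\nu = [\minf;\,x-a,\ga] \in \pset_{\minf}(x-a) \subset \pset(\minf)$, and $\minf = \mu_\emptyset$ is primitive-limit. For $\nu$ of positive depth or a finite leaf, apply Theorem \ref{main} to produce a finite \mlv chain $\mu_0 \to \cdots \to \mu_r = \nu$ and examine the last augmentation. If it is ordinary, then $\nu = [\mu_{r-1};\,\phi_r,\ga_r]$ with $\deg(\phi_r) > \deg(\mu_{r-1})$ by the \mlv definition; thus $\phi_r \in \kp^0(\mu_{r-1})$, so $\rho := \mu_{r-1}$ is primitive-ordinary and $\nu \in \pset_\rho(\phi_r)$. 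If the last step is a limit augmentation via an essential continuous family $\aa_{r-1}$, set $\rho := \mu_{\aa_{r-1}}$; by Lemma \ref{muaa} and the subsequent identity $[\aa;\,\phi,\ga]=[\mu_\aa;\,\phi,\ga]$, we still obtain $\nu = [\rho;\,\phi_r,\ga_r] \in \pset_\rho(\phi_r)$, with $\rho$ primitive-limit. In every case $\nu \in \prh$.

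For uniqueness, suppose $\nu \in \pset_{\rho_1}(\phi_1) \cap \pset_{\rho_2}(\phi_2)$ with $\rho_1,\rho_2$ primitive. The strategy is to build a canonical \mlv chain for $\nu$ out of each representation and invoke essential uniqueness. If $\rho_i$ is primitive-ordinary, any \mlv chain ending at $\rho_i$ may be extended by the ordinary step $\nu = [\rho_i;\,\phi_i,\ga_i]$, which is legal since $\phi_i \in \kp^0(\rho_i)$. If $\rho_i = \mu_{\aa_i}$ is primitive-limit, prepend an \mlv chain ending at the first valuation of $\aa_i$ and finish with the limit step $\nu = [\aa_i;\,\phi_i,\ga_i] = [\rho_i;\,\phi_i,\ga_i]$. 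By \cite[Thm.\,4.7]{MLV} the two resulting chains have identical terminal data, whence $\rho_1 = \rho_2$: coincidence is immediate in the ordinary--ordinary case ($\rho_i = \mu_{r-1}$), and in the limit--limit case the two families $\aa_1,\aa_2$ agree up to equivalence, so Lemma \ref{muaa} yields $\mu_{\aa_1} = \mu_{\aa_2}$.

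The hard part will be excluding the mixed case, where one $\rho_i$ is primitive-ordinary and the other is primitive-limit. The key observation is a degree dichotomy: $\deg(\rho) < \deg(\nu)$ in the primitive-ordinary case (since $\phi \in \kp^0(\rho)$ forces $\deg(\phi) > \deg(\rho)$), whereas $\deg(\rho) = \dgi(\aa) = \deg(\nu)$ in the primitive-limit case $\rho = \mu_\aa$. The corresponding \mlv chains for $\nu$ would therefore differ in the last-step type and in the degree sequence $(\deg(\mu_0),\dots,\deg(\mu_r))$, contradicting essential uniqueness. This degree dichotomy, combined with \cite[Thm.\,4.7]{MLV}, completes the uniqueness argument.
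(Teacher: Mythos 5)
You should first be aware that the paper itself contains no proof of Theorem \ref{previous}: it is imported from \cite[Thm.\,7.3]{VT}, so your argument has to stand on its own. Your existence half is essentially sound on the range where the result is actually used (finite leaves, and more generally nodes of $\tq$): reading off the last step of an MLV chain furnished by Theorem \ref{main}, and rewriting a limit step $[\aa;\,\phi_r,\ga_r]$ as $[\mu_\aa;\,\phi_r,\ga_r]$, is exactly the analysis carried out in Section \ref{subsecPrimTan}. Two points are missing: you should note that $\mu_{r-1}$ is residually transcendental, hence commensurable and in $\tq$, as the definition of a primitive-ordinary node requires; and your recipe silently breaks when $\nu$ is itself a primitive-limit node of $\ts$ (the root $\minf$, or a minimal limit augmentation $\mu_\aa$), since then $\ga_r=\ga_\aa$ and $\nu\notin\pset_{\mu_\aa}(\phi_r)$ --- such inner nodes are not proper ordinary augmentations of any node, so they must be excluded or treated separately.

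The uniqueness half has genuine gaps. First, your reduction to \cite[Thm.\,4.7]{MLV} presupposes that every presentation $\nu\in\pset_{\rho_i}(\phi_i)$ with $\rho_i$ primitive can be completed to an MLV chain of $\nu$ having $\rho_i$ (or its defining family) as penultimate data. For $\rho_i=\mu_{\aa_i}$ this is not automatic: the MLV-chain definition demands $\deg(\mu_n)=\deg(\aa_n)$ for the first valuation of the family (so one must at least pass to a cofinal subfamily) and, more seriously, $\phi_n\notin\ty(\mu_n,\mu_{n+1})$ for the key polynomial of the preceding step; a naive concatenation of ``an MLV chain ending at the first valuation of $\aa_i$'' with the limit step can violate this condition, and repairing it is precisely the nontrivial content of the construction behind Theorem \ref{main}, not a formality. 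Second, in the limit--limit case Lemma \ref{muaa} does not deliver what you ask of it: it gives independence of the choice of limit key polynomial for a \emph{fixed} family, not that two different families produced by two chains have the same minimal limit augmentation; for that you need the precise equivalence of families furnished by \cite[Thm.\,4.7]{MLV} together with an argument that equivalent families share $\ga_\aa$ and $\mu_\aa$. Third, even in the ordinary--ordinary case you must verify that \cite[Thm.\,4.7]{MLV} yields literal equality of the penultimate node, not merely equality of lengths, degrees, singular values and step types. A shorter, self-contained route avoids chains altogether: $\rho_1,\rho_2<\nu$ are comparable; if $\rho_1<\rho_2$, Lemma \ref{propertiesTMN}\,(iii) gives $\ty(\rho_1,\rho_2)=\ty(\rho_1,\nu)=[\phi_1]_{\rho_1}$, so $\rho_1$ and $\rho_2$ agree in degree $<\deg(\nu)$; if $\rho_2$ is primitive-ordinary, a key polynomial $\chi$ for $\rho_2$ of minimal degree has $\deg(\chi)<\deg(\nu)$, whence $\rho_1(\chi)=\rho_2(\chi)$, and Theorem \ref{univbound} combined with $\wt(\rho_1)<\wt(\rho_2)$ (Lemma \ref{propertiesTMN}\,(i)) gives a contradiction; if $\rho_2=\mu_\aa$ is primitive-limit, the same weight comparison applied to the members of $\aa$ shows they all lie strictly below $\rho_1$, contradicting the cofinality of $\aa$ below $\mu_\aa$ (an easy consequence of $\ga_\aa$ being the supremum of the values $\rho_i(\phi)$).
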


\section{Okutsu equivalence of finite leaves}\label{secOk}

Let $(K,v)$ be a valued field. Consider an algebraic closure  of $K$ and the corresponding separable closure: $K\subset\ks\subset \kb$.
Let $\vb$ be an extension of $v$ to $\kb$, and let $(\kh,\vb)$ be the henselization of $(K,v)$ determined by $\vb$. Thus, $\kh\subset \ks$ is the fixed field of the decomposition group of $\vb$ in the Galois group $\gal(\ks/K)$.

In this section, we introduce a certain \emph{Okutsu equivalence} $\ok$ on the set $\lfin(\tq)$ of finite leaves of $\tq$ and we obtain a natural parametrization of the quotient set $\lfin(\tq)/\!\ok$ by a certain subset of the tangent space of $\ts$. 
Also, we use the tree structure of $\tq$ to define an ultrametric topology on the set $\lfin(\tq)$.

For any $\phi\in\irr(K)$, we denote by $K_\phi$ the simple field extension $K[x]/\phi\kx$.

\subsection{Finite leaves of $\tq$}\label{subsecFinLeaves}

%Let us briefly recall a well-known bijection between  $\irr(\kh)$ and the set $\lfin(\tq)$ of finite leaves of $\tq$.

Any $F\in\irr(\kh)$ determines a valuation $v_F$ on $\khx$ defined as:
$$%\begin{equation}\label{vF}
v_F(f)=v\left(f(\t)\right)\ \mbox{ for all }\ f\in\khx,
$$%\end{equation}
where $\t\in\kb$ is a root of $F$. By the henselian property, the value $v\left(f(\t)\right)$ does not depend on the choice of $\t$. The support of $v_F$ is the prime ideal $F\khx$.

Let us denote by $w_F$ the restriction of $v_F$ to $\kx$. The support of $w_F$ is the prime ideal $\phi\kx$, where the ``norm" polynomial $\phi=N(F)\in\irr(K)$ is uniquely  determined by the equality $\left(F \khx\right)\cap \kx=\phi\kx$. Let $\wb_F$ be the valuation on $K_\phi$ naturally induced by $w_F$.

As a consequence of the results in \cite[Sec. 17]{endler}, the following mapping  is bijective: 
\begin{equation}\label{Bij}
\irr(\kh)\lra \lfin(\tq),\qquad F\longmapsto w_F. 
\end{equation}

%\begin{theorem}\end{theorem}	

Let us now recall how to describe the extensions of $v$ to the field $K_\phi$, for an arbitrary $\phi\in\irr(K)$. 
Since $\kh/K$ is a separable extension, the factorization of $\phi$ into a product of monic irreducible polynomials in $\kh[x]$ takes the form
$$
\phi=F_1\cdots F_r,
$$
with pairwise different $F_1,\dots,F_r\in\irr(\kh)$, whose norm is $N(F_i)=\phi$ \,for all $i$.

\begin{theorem}\label{Endler}\cite[Cor. 3.2]{NN}
The extensions  of $v$ to $K_\phi$ are precisely $\wb_{F_1},\dots,\wb_{F_r}$. 
\end{theorem}

\subsection{Primitive tangent space of $\ts$ and finite leaves}\label{subsecPrimTan}

By Theorem \ref{previous}, each finite leaf   $\f\in\lfin(\tq)$  belongs to $\prh$ for a unique primitive node $\rho\in\ts$. 
We say that $\rho$ is  the \emph{previous primitive node} of $\f$, and we denote it by $\rho=\rf\in\ts$. 

Since $\f$ is commensurable and has nontrivial support, we have $\rf\ne\f$. Indeed, the ordinary-primitive nodes have trivial support, while the limit-primitive nodes are incommensurable.
Consider a finite MLV chain whose end node is $\f$:
$$ \mu_0\ \stackrel{\phi_1,\ga_1}\lra\  \mu_1\ \lra\ \cdots\ \lra\ \mu_{r-1}\ \stackrel{\phi_{r},\infty}\lra\ \mu_{r}=\f.$$

If $\f$ has depth zero, then $\f=\mu_0=\om_{a,\infty}$ for some $a\in K$. Then, 
$$%\begin{equation}\label{dpth0}
\rf=\minf,\qquad \deg(\f)=\deg(\rf)=1.
$$%\end{equation}

Suppose that $\f$ has a positive depth and the last augmentation $\mu_{r-1}\to\f$ is ordinary; that is, $\f=[\mu_{r-1};\,\phi_r,\infty]$.
By the definition of a MLV chain, $\phi_r\in\kp(\mu_{r-1})$ and  $\deg(\f)=\deg(\phi_r)>\deg(\mu_{r-1})$; thus, $\mu_{r-1}$ is a ordinary-primitive node and
\begin{equation}\label{dpthOrd}
\rf=\mu_{r-1},\qquad \deg(\f)=\deg(\phi_r)>\deg(\rf).
\end{equation}

Suppose that $\f$ has a positive depth and $\mu_{r-1}\to\f$ is a limit augmentation; that is, $\f=[\aa;\,\phi_r,\infty]$ for some  essential continuous family $\aa$ in $\tq$ whose first valuation is $\mu_{r-1}$. Then,  
$$%\begin{equation}\label{dpthLim}
\rf=\mu_\aa,\qquad \deg(\f)=\deg(\phi_r)=\deg(\rf).
$$%\end{equation}
\vskip0.1cm

\defn On the set $\lfin(\tq)$, we define the equivalence relation
$$
\f\ok\f'\ \sii\ \rf=\rfp\ \mbox{ and }\ \ty(\rf,\f)=\ty(\rf,\f').
$$
In this case, we say that the finite leaves $\f$ and $\f'$ are \emph{Okutsu equivalent}.

We denote by $\clok{\f}$ the Okutsu equivalence class of $\f$.\e

\defn Let  $\tprim$ be the subset of the tangent space of $\ts$ consisting of all tangent vectors based on primitive nodes:
$$
\tprim=\left\{(\rho,t)\mid \rho\in\prim(\ts),\ t\in\td(\rho)\right\}.
$$\vskip0.2cm

The next result follows immediately from Theorem \ref{previous} and the definition of $\ok$.

\begin{theorem}\label{mainT}
	There is a canonical bijective mapping
	$$
	\tprim\lra \lfin(\tq)/\!\ok,\qquad \left(\rho,[\phi]_\rho\right)\longmapsto \clok{[\rho;\,\phi,\infty]},
	$$
	whose inverse mapping is: \ $\clok{\f}\mapsto \left(\rf,\ty(\rf,\f)\right)$.
\end{theorem}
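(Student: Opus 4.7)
The plan is to verify that both the forward map $(\rho,[\phi]_\rho)\mapsto\clok{[\rho;\phi,\infty]}$ and the stated inverse $\clok{\f}\mapsto(\rf,\ty(\rf,\f))$ are well-defined on equivalence classes, and that their compositions yield the identity. The whole argument is essentially formal, driven by Theorem \ref{previous} together with the definition of the Okutsu equivalence relation $\ok$, and with Lemma \ref{propertiesTMN}(ii) supplying the identification of tangent directions of the form $\ty(\mu,\nu)$.

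First I would verify well-definedness of the forward map. Given $(\rho,[\phi]_\rho)\in\tprim$, set $\f:=[\rho;\phi,\infty]$. Since $\infty\in\gsme\infty$ and $\rho(\phi)<\infty$, this is a legitimate ordinary augmentation in $\tq$ with nontrivial support $\phi\kx$, so $\f\in\lfin(\tq)$. By construction $\f\in\pset_\rho(\phi)\subseteq\prh$, so the uniqueness clause in Theorem \ref{previous} yields $\rf=\rho$. Applying Lemma \ref{propertiesTMN}(ii) to $\rho<\f$ with $\phi$ as a minimal-degree witness of $\rho(\phi)<\f(\phi)=\infty$, one gets $\ty(\rho,\f)=[\phi]_\rho$. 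Consequently $\clok{\f}$ depends only on the pair $(\rho,[\phi]_\rho)$, and not on the representative $\phi$ chosen inside $[\phi]_\rho$.

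For the inverse, given $\f\in\lfin(\tq)$, Theorem \ref{previous} supplies the unique primitive node $\rf\in\prim(\ts)$ with $\f\in\prh$, and Lemma \ref{propertiesTMN}(ii) makes $\ty(\rf,\f)$ a tangent direction at $\rf$, so $(\rf,\ty(\rf,\f))\in\tprim$; well-definedness on $\ok$-classes is tautological, since $\rf$ and $\ty(\rf,\f)$ are precisely the invariants entering the definition of $\ok$. For mutual inversion: forward-then-inverse applied to $(\rho,[\phi]_\rho)$ returns $(\rf,\ty(\rf,\f))=(\rho,[\phi]_\rho)$ by the previous paragraph; inverse-then-forward applied to $\clok{\f}$ produces $\clok{[\rf;\phi,\infty]}$ for any representative $\phi\in\ty(\rf,\f)$, and since both $\f$ and $[\rf;\phi,\infty]$ share the same previous primitive node $\rf$ and the same tangent direction $\ty(\rf,\f)=[\phi]_{\rf}$, they are Okutsu equivalent.

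The one place that requires care is the inclusion $\pset_\rho(\phi)\subseteq\prh$ used in the forward map when $\rho$ is primitive-ordinary: here $\prh$ is defined using only $\kp^0(\rho)$, so the argument implicitly assumes the tangent direction $[\phi]_\rho$ contains a strong key polynomial. This convention can be justified by observing that for a non-strong key polynomial $\phi$ of a primitive-ordinary $\rho=[\mu_{r-1};\phi_r,\ga_r]$ one has the identity $[\rho;\phi,\infty]=[\mu_{r-1};\phi,\infty]$ (by comparison of $\phi$-expansions, since $\rho$ and $\mu_{r-1}$ agree on polynomials of degree less than $\deg(\rho)=\deg(\phi)$), so that any Okutsu class that would otherwise be assigned to such a non-strong direction is already captured by a strong direction at the previous primitive node $\mu_{r-1}$. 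Once this convention is pinned down, the bijection is an immediate consequence of Theorem \ref{previous} and the definition of $\ok$.
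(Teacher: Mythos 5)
Your core verification follows the paper's own (one-line) argument: Theorem \ref{previous} gives a well-defined previous primitive node $\rf$, and Lemma \ref{propertiesTMN}(ii) identifies $\ty(\rf,\f)$ as a tangent direction, so the bijection is definitional once one checks well-definedness and mutual inversion, exactly as you do.

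The subtlety you flag in your last paragraph is a genuine gap, and you are right to insist on it. As literally written, $\tprim$ admits every tangent direction at a primitive-ordinary node $\rho$, including the non-strong ones of degree $\deg(\rho)$; but $\prh$ only ranges over $\kp^0(\rho)$, so for a non-strong $\phi$ the finite leaf $[\rho;\phi,\infty]$ does \emph{not} lie in $\prh$ and its previous primitive node is strictly smaller than $\rho$. (Concretely, with $\mu_0=\om_{0,0}$ the Gauss valuation and $\phi=x$, the pair $(\mu_0,[x]_{\mu_0})$ and $(\minf,[x]_{\minf})$ would both hit $\clok{\om_{0,\infty}}$, breaking injectivity.) The theorem therefore only holds with the reading of $\tprim$ that matches the definition of $\prh$: at a primitive-ordinary $\rho$, one should take $t\in\kp^0(\rho)/\!\smu$ rather than $t\in\td(\rho)$. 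You should present this as a clarification of the definition of $\tprim$ rather than as a ``convention that can be justified'': the identity $[\rho;\phi,\infty]=[\mu_{r-1};\phi,\infty]$ you give shows \emph{why} the inclusion of non-strong directions would overcount, not that the literal definition already excludes them. To make that identity a complete bookkeeping argument you should also record that $\phi\smu[_{\mu_{r-1}}]\phi_r$ (since $\mu_{r-1}(\phi-\phi_r)=\rho(\phi-\phi_r)>\ga_r>\mu_{r-1}(\phi_r)$), so that $\phi$ is in fact a \emph{strong} key polynomial of the primitive-ordinary node $\mu_{r-1}$, and the leaf is accounted for by the pair $(\mu_{r-1},[\phi]_{\mu_{r-1}})$ exactly as expected.
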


Combined with the identification $\lfin(\tq)=\irr(K)$ given by (\ref{Bij}), this result generalizes \cite[Thm. 5.14]{defless} to the most general case, where no assumption at all is made on the base valued field $(K,v)$.

Let us quote some basic properties of the equivalence relation $\ok$. %which follow immediately from the definition.

\begin{lemma}\label{propertiesOk}
	Let $\f$, $\f'$ be two finite leaves in $\tq$, and let $\supp(\f)=\phi\kx$.
	\begin{enumerate}
		\item[(i)] The polynomial $\phi$ belongs to $\kp(\rf)$ and $\ty(\rf,\f)=[\phi]_{\rf}$.
		\item[(ii)] Let $\rho$ be a primitive node. Then, $\rho=\rf$ if and only if $\rho<\f$, $\phi\in\kpr$ and
\begin{equation}\label{deg}
\deg(\phi)>\deg(\rho),\quad\mbox{if $\rho$ is ordinary-primitive}. \end{equation}		
\item[(iii)] If $\rf$ is a limit-primitive node, then $\f\ok\f'$ if and only if $\rf=\rfp$.
		\item[(iv)] If  $\f\ok\f'$, then $\deg(\f)=\deg(\f')$.
		%\item Let $\rho$ be a ordinary-primitive node. Then, $\rho=\rf$ if and only if $\rho<\f$ and $\deg(\phi)>\deg(\rho)$.
	\end{enumerate}
\end{lemma}

\begin{proof}
By the definition of $\rf$, the valuation $\f=[\rf;\,\varphi,\ga]$ is an ordinary augmentation of $\rf$, for some $\varphi\in\kp(\rf)$, $\ga\in\gsme\infty$. Since $\f$ has nontrivial support, we have necessarily $\ga=\infty$. Then, $\varphi\in\kx$ is a monic irreducible polynomial such that $\supp(\f)=\varphi\kx$. This implies $\varphi=\phi$, and this proves (i). 

Let $\rho$ be a primitive node. If $\rho=\rf$, then $\rho<\f$ and $\phi\in\kpr$ by (i). Also, (\ref{deg}) follows from (\ref{dpthOrd}).

Conversely, suppose that $\rho<\f$ and $\phi\in\kpr$. Since $\rho(\phi)<\infty=\f(\phi)$, Lemma \ref{propertiesTMN} shows that $\ty(\rho,\f)=[\varphi]_\rho$ for some $\varphi\in\kpr$ such that $\varphi\mid_\rho\phi$. By \cite[Prop. 6.6]{KP}, we have $\varphi\sim_\rho\phi$, so that $\ty(\rho,\f)=[\phi]_\rho$.
This implies, $[\rho; \,\phi,\infty]=\f$ because both valuations coincide on $\phi$-expansions, again  by Lemma \ref{propertiesTMN}. 

If $\rho$ is limit-primitive, or $\rho$ is ordinary-primitive and (\ref{deg}) holds,  then $\f$ belongs to $\prh$ and this implies $\rho=\rf$ by Theorem \ref{previous}. This ends the proof of (ii).

If $\rf$ is a limit-primitive node, then it has a unique tangent direction; thus,  for all $\f'$ such that $\rho(\f)<\f'$ we have necessarily $\ty(\rf,\f)=\ty(\rf,\f')$. This proves (iii).
	
Finally, if  $\f\ok\f'$, then (iv) follows from (i), because
$\deg(\f)=\deg\,\ty(\rf,\f)=\deg\,\ty(\rf,\f')=\deg(\f')$.	
\end{proof}

% \bs

%\defn Let  $\f\in\ts$ be a finite leaf, with $\supp(\f)=\phi\kx$. The \emph{Okutsu bound} of $\f$ is the value$$\dta_0(\f):=\rf(\phi)=\deg(\phi)\wt\left(\rf\right)\in\gsme.$$\vskip0.2cm

%Since $\phi$ is a key polynomial for $\rf$, the equality $\rf(\phi)=\deg(\phi)\wt\left(\rf\right)$ follows from Theorem \ref{univbound}.

%If $\rf$ is a ordinary-primitive node, then $\rf$ is commensurable and $\dta_0(\f)\in\gq$. However, if  $\rf$ is a limit-primitive node, then $\rf$ is incommensurable and  $$\dta_0(\f)=\rf(\phi)=\sval(\rf)\in\gsme\setminus\gq,$$ since $\phi$ is a key polynomial of minimal degree in this case.

\subsection{Ultrametric topology on the set of finite leaves}\label{subsecUltra}

Let us introduce an ultrametric topology on the set $\lfin(\tq)$.\e

\defn For all $\f,\gf\in\lfin(\tq)$, we define %let $\mu=v_F\wedge v_G$ be the greatest common lower node of the leaves $v_F$, $v_G$.
$$
u(\f,\gf)=\begin{cases}
\wt\left(\f\wedge \gf\right),&\mbox{ if }\f\ne \gf,\\
\infty,&\mbox{ if }\f=\gf,
\end{cases}
$$
where $\f\wedge \gf$ is the greatest common lower node in the tree $\tq$ \cite[Sec. 5.6]{VT}.  

%If $\f\ne \gf$, these leaves are incomparable. Hence,  their greatest common lower node $\f\wedge \gf$ is an inner commensurable node \cite{VT}.

\begin{lemma}\label{ultrametric}
	The function $u\colon \lfin(\tq)\times \lfin(\tq)\to \gq\infty$ has the following properties:
	\begin{enumerate}
		\item[(i)] $u(\f,\gf)=\infty  \ \sii \ \f=\gf$.
		\item[(ii)] $u(\f,\gf)\ge\min\{u(\f,\h),u(\h,\gf)\}\quad\mbox{for all}\quad \h\in \lfin(\tq)$.
		\item[(iii)] $u(\f,\gf)=u(\gf,\f)$.
	\end{enumerate}
\end{lemma}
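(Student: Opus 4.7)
The proof splits into the three items, with (iii) and (i) being essentially formal, and (ii) requiring the main argument. Property (iii) is immediate from the commutativity of the meet operation: $\f\wedge\gf=\gf\wedge\f$, so the two sides agree by inspection of the definition of $u$.

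For (i), the ``$\Leftarrow$'' direction is the definition of $u$. For the converse, if $\f\ne\gf$ then, because two distinct leaves of a tree are incomparable (leaves being maximal nodes), the meet $\mu:=\f\wedge\gf$ satisfies $\mu<\f$ and $\mu<\gf$ strictly; in particular $\mu$ is not a leaf, so Theorem \ref{maximal} yields $\kp(\mu)\ne\emptyset$, meaning $\mu$ is an inner node. For any $\phi\in\kp(\mu)$ of minimal degree, $\sval(\mu)=\mu(\phi)$ lies in $\gq$ (since inner nodes have trivial support and $\mu$ takes values in $\gq\infty$), whence $\wt(\mu)<\infty$, i.e.\ $u(\f,\gf)<\infty$.

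The main work is (ii). The plan is to exploit the tree structure of $\tq$ through the standard ``three-meets'' trichotomy: for pairwise distinct leaves $\f,\gf,\h$, setting $a=\f\wedge\gf$, $b=\f\wedge\h$, $c=\gf\wedge\h$, two of $a,b,c$ coincide and are $\le$ the third. I will derive this by observing that $a$ and $b$ both lie in the totally ordered interval $(-\infty,\f\,]$ and hence are comparable; assuming without loss of generality $b\le a$, the meet property forces $b\le c$, and a second comparison of $a$ with $c$ inside $(-\infty,\gf\,]$ yields either $a=b\le c$ or $b=c\le a$ (the symmetric case $a\le b$ gives the remaining possibility $a=c\le b$). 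All three meets are inner nodes (by the argument used for (i)), so Lemma \ref{propertiesTMN}(i) gives that $\wt$ is strictly increasing on chains of inner nodes; a direct case split on the trichotomy then delivers $\wt(a)\ge\min\{\wt(b),\wt(c)\}$, which is the desired inequality. The degenerate cases in which some of $\f,\gf,\h$ coincide are immediate: if $\h\in\{\f,\gf\}$, one of $u(\f,\h),u(\h,\gf)$ equals $\infty$ and the other equals $u(\f,\gf)$, so (ii) becomes an equality. The only step with any genuine content is the three-meets trichotomy, which is a purely order-theoretic consequence of the tree axiom; no substantive obstacle is expected.
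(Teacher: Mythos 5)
Your proof is correct and follows essentially the same route as the paper: exploit the tree structure by comparing pairwise meets inside a common down-set to obtain the three-meets trichotomy, then invoke the strict monotonicity of $\wt$ from Lemma~\ref{propertiesTMN}(i). If anything, your explicit trichotomy handles the case $\rho=\mu$ (in the paper's notation) more carefully than the paper's displayed argument, which states $u(\f,\gf)=u(\f,\h)=\wt(\rho)$ as if $\rho<\mu$ and only addresses the degenerate configuration in the remark following the proof.
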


\begin{proof}
	Conditions (i) and (iii) follow immediately from the definition of $u$.
	
	Let us prove (ii). If there is any coincidence between the three leaves $\f,\gf,\h$, then the statement of (ii) is obvious. Let us suppose that these leaves are pairwise different.
	
	Let $\rho=\h\wedge \f$, $\mu=\h\wedge \gf$. Since $u(\f,\gf)=u(\gf,\f)$, we may assume that $\rho\le\mu$. The relative position of all these nodes in the tree $\tq$ is the following:
	
	\begin{center}
		\setlength{\unitlength}{4mm}
		\begin{picture}(20,7)
		\put(0.25,1.2){\line(1,0){18}}
		\put(0,0.9){$\bullet$}\put(6,0.9){$\bullet$}\put(18,0.9){$\bullet$}
		\put(7,5.3){$\bullet$}\put(13.2,3.9){$\bullet$}
		%\multiput(3,.5)(0,.25){22}{\vrule height2pt}
		%\multiput(8,.9)(0,.25){9}{\vrule height2pt}
		%\multiput(-.1,3)(.25,0){55}{\hbox to 2pt{\hrulefill }}
		\put(-2.5,1){\begin{footnotesize}$\cdots\cdots$\end{footnotesize}}
		\put(18.8,1){\begin{footnotesize}$\h$\end{footnotesize}}
		\put(7.8,5.5){\begin{footnotesize}$\f$\end{footnotesize}}
		\put(14,4.1){\begin{footnotesize}$\gf$\end{footnotesize}}
		\put(6,0){\begin{footnotesize}$\mu$\end{footnotesize}}
		\put(0,0){\begin{footnotesize}$\rho$\end{footnotesize}}
		\put(0.25,1.2){\line(1,1){3}}\put(3.25,4.2){\line(3,1){4}}
		\put(6.25,1.2){\line(1,1){3}}\put(9.25,4.2){\line(1,0){4}}
		\end{picture}
	\end{center}\e
	
	We have $u(\f,\gf)=u(\f,\h)=\wt(\rho)$ and $u(\gf,\h)=\wt(\mu)$. On the other hand, Lemma \ref{propertiesTMN} shows that $\wt(\rho)<\wt(\mu)$, if $\rho<\mu$. Thus, condition (ii) holds.
\end{proof}\e

If $u(\f,\h)\ne u(\h,\gf)$, then $\rho<\mu$ and $u(\f,\gf)=\wt(\rho)=\min\{u(\f,\h),u(\h,\gf)\}$. The inequality $u(\f,\gf)>\min\{u(\f,\h),u(\h,\gf)\}$ holds in the following situation:

\begin{center}
	\setlength{\unitlength}{4mm}
	\begin{picture}(20,8)
	\put(0.25,1.2){\line(1,0){18}}
	\put(7.9,2.9){$\bullet$}\put(6,0.9){$\bullet$}\put(18,0.9){$\bullet$}
	\put(11,6){$\bullet$}\put(13.2,3.9){$\bullet$}
	%\multiput(3,.5)(0,.25){22}{\vrule height2pt}
	%\multiput(8,.9)(0,.25){9}{\vrule height2pt}
	%\multiput(-.1,3)(.25,0){55}{\hbox to 2pt{\hrulefill }}
	\put(-2.5,1){\begin{footnotesize}$\cdots\cdots$\end{footnotesize}}
	\put(18.8,1){\begin{footnotesize}$\h$\end{footnotesize}}
	\put(11.8,6.1){\begin{footnotesize}$\f$\end{footnotesize}}
	\put(14,4.1){\begin{footnotesize}$\gf$\end{footnotesize}}
	\put(5.1,0){\begin{footnotesize}$\rho=\mu$\end{footnotesize}}
	\put(5.7,3.2){\begin{footnotesize}$\f\wedge \gf$\end{footnotesize}}
	\put(6.25,1.2){\line(1,1){5}}\put(8.2,3.15){\line(5,1){5}}
	\end{picture}
\end{center}\e

As a consequence, the function $u$ provides an structure of \emph{ultrametric space} on the set $\lfin(\tq)$, and a corresponding topology  \cite[Sec. 1.3]{KuhlBook}. A basis for the topology is formed by the balls
$$
B_\ga(\f):=\left\{\gf\in\lfin(\tq)\mid u(\f,\gf)>\ga\right\},\quad \f\in\lfin(\tq),\quad \ga\in\gq.
$$
In this topology, two finite leaves $\f,\gf$ are ``close" if the value $u(\f,\gf)$ is ``large".

Finally, let us remark that  two finite leaves in $\lfin(\tq)$  are Okutsu equivalent if and only if they are ``close enough" with respect to the ultrametric topology.  %This observation is the content of the following result, which is an immediate consequence of the definitions of Okutsu equivalence and the function $u$.

\begin{lemma}\label{okTopology}
	Let $\f,\gf\in\lfin(\tq)$. Then, the following conditions are equivalent.
\begin{enumerate}
\item[(i)\ ] \ $\f\ok\gf$,
\item[(ii)\,] \ $\rf,\rg<\f\wedge\gf$,
\item[(iii)] \ $u(\f,\gf)>\max\{\wt(\rf),\wt(\rg)\}$.
\end{enumerate}
\end{lemma}

\begin{proof}
The equivalence between (i) and (ii) follows from Theorem  \ref{previous}. The equi\-va\-lence between (ii) and (iii) follows from Lemma \ref{propertiesTMN}.
\end{proof}

%\nn{\bf Conjecture. }There must be a natural topology on $\ts$, inducing in a natural way a topology on the tangent space $\tprim$, such that the bijection of Theorem \ref{mainT} is an homeomorphism. In $\lfin(\ts)/\!\ok$ we would consider the quotient topology of the utrametric topology on $\lfin(\ts)$.

%There are several ways to introduce a topology on the tree $\ts$. It is natural to focus on topologies inducing  on the subset of finite leaves the ultrametric topology we have just introduced. More precisely, specialy interesting are the topologies inducing a topology on the tangent space $\tprim$, such that the bijection of Theorem \ref{mainT} is a homeomorphism, if we consider on $\lfin(\tq)/\!\ok$ the quotient topology of the utrametric topology on $\lfin(\tq)$.

\section{Okutsu equivalence in the henselian case}\label{secOkHensel}

In this section, we suppose that $(K,v)$ is henselian. We fix an algebraic closure $\kb$ of $K$ and we still denote by $v$ the unique extension of $v$ to $\kb$.

Recall the canonical bijection between $\irr(K)$ and $\lfin(\tq)$ given in (\ref{Bij}):
\begin{equation}\label{BijHensel}
\irr(K)\lra \lfin(\tq),\qquad F\longmapsto v_F.
\end{equation}

%On $F$-expansions, $v_F$ acts as follows: $v_F\left(\sum\nolimits_{0\le s}{a_s F^s}\right)=v(a_0(\t)).$

Our first aim is to show that, under the identification $\lfin(\tq)=\irr(K)$, the ultrametric topology on the set $\lfin(\tq)$ introduced in Section \ref{secOk} coincides with the classical topology induced by $v$ on $\irr(K)$.

The following result was proved in \cite{defless} for defectless polynomials and extended to arbitrary irreducible polynimials in \cite[Thm. 4.4]{NN}.

\begin{theorem}\label{fundamental}
Let $F\in\irr(K)$ and $\phi\in\kpm$ for some valuation $\mu$ on $\kx$. Then, $$\phi\mmu F \ \sii\
\mu<v_F\ \mbox{ and }\ \,\ty(\mu,v_F)=\cl{\phi}.$$
In this case,
 $F\smu\phi^\ell$ with $\ell=\deg(F)/\deg(\phi)$. 
\end{theorem}

\begin{corollary}\label{muminimal}
Let $F\in\irr(K)$. Then, $F$ is $\mu$-minimal for all valuations $\mu<v_F$.
\end{corollary}

\begin{proof}
Let $\ty(\mu,v_F)=[\phi]_\mu$. Since $\mu(F)<\infty=v_F(F)$, Lemma \ref{propertiesTMN} shows that $\phi\mmu F$.
By Theorem \ref{fundamental}, $F\smu\phi^\ell$ with $\ell=\deg(F)/\deg(\phi)$. Since $\phi$ is $\mu$-minimal, Theorem \ref{univbound} shows that
$$
\dfrac{\mu(F)}{\deg(F)}=\dfrac{\mu(\phi)}{\deg(\phi)}=\wt(\mu).
$$
Hence, $F$ is $\mu$-minimal, again by Theorem \ref{univbound}.
\end{proof}\e

%\begin{corollary}\label{vPhi=vF}Suppose that $\phi\mmu F$ for some $F\in\irr(K)$, $\mu\in\ts$ and $\phi\in\kpm$. Then, for all $g\in\kx$ with $\deg(g)<\deg(\phi)$, we have $v_\phi(g)=v_F(g)$.\end{corollary}

%\begin{proof} Since $v_\phi$ is the unique leaf with support $\phi\kx$, we have $v_\phi=[\mu;\,\phi,\infty]$. In particular, $\mu< v_\phi$ and $\ty(\mu,v_\phi)=\cl{\phi}$.

%By Theorem \ref{fundamental}, we have  $\ty(\mu,v_F)=\cl{\phi}$ too. If $g\in\kx$ has $\deg(g)<\deg(\phi)$, then $\phi\nmid_\mu g$. By Lemma \ref{propertiesTMN}, $\mu(g)=v_\phi(g)$  and $\mu(g)=v_F(g)$ hold simultaneously. \end{proof}\e

As another consequence of Theorem \ref{fundamental},  the ultrametric distance $u$ on $\irr(K)=\lfin(\ts)$ is given by a classical formula. 

\begin{corollary}\label{symmetry}
For all $F,G\in\irr(K)$ we have
$$
u(F,G):=u(v_F,v_G)=v_F(G)/\deg(G)=v\left(\op{Res}(F,G)\right),
$$
where $\op{Res}(F,G)$ is the resultant of the two polynomials.
\end{corollary}

\begin{proof}
If $F=G$, we have $v_F(F)=\infty=u(F,F)$. If $F\ne G$, then $\mu=v_F\wedge v_G$ is an inner node of  $\tq$ satisfying
$\mu<v_F$, $\mu<v_G$ and  $\ty(\mu,v_F)\ne\ty(\mu,v_G)$.

Let $\ty(\mu,v_F)=[\phi]_\mu$, $\ty(\mu,v_G)=[\varphi]_\mu$, so that $\phi\not\sim_\mu\varphi$. By Theorem \ref{fundamental},
$$
G\smu \varphi^\ell,\quad \ell=\deg(G)/\deg(\varphi).
$$
Hence, $\phi\nmid_\mu G$ and this implies $\mu(G)=v_F(G)$ by Lemma \ref{propertiesTMN}. On the other hand, $G$ is $\mu$-minimal by Corollary \ref{muminimal}. By Theorem \ref{univbound},
$$
\dfrac{v_F(G)}{\deg(G)}=\dfrac{\mu(G)}{\deg(G)}=\wt(\mu)=u(F,G).
$$

The equality $v_F(G)/\deg(G)=v\left(\op{Res}(F,G)\right)$ is well-known.% \cite[Lem. 3.10]{defless}.
\end{proof}\e

In particular, the ultrametric topology on $\irr(K)$ coincides with the classical topology induced by the valuation $v$.\e

\defn
We denote by $\rho_F:=\rf$ the previous primitive node of the leaf $\,\f=v_F$.%\e

\begin{corollary}\label{equivOk}
If $F,\,G\in \irr(K)$ have the same degree, then the following conditions are equivalent.
\begin{enumerate}
\item[(a)] $v_F\ok v_G$.
\item[(b)] $u(F,G)>\max\{\wt(\rho_F),\wt(\rho_G)\}$.
\item[(c)] $F\sim_{\rho_F}G$.
\end{enumerate}
\end{corollary}

\begin{proof}
Lemma \ref{okTopology} shows that conditions (a) and (b) are equivalent.

Since  $\deg(F)=\deg(G)$, we have $\deg(F-G)<\deg(F)$, so that
$$
\rho_F(F-G)=v_F(F-G)=v_F(G).
$$
Thus, condition (c) is equivalent to $v_F(G)>\rho_F(F)=\deg(F)\wt(\rho_F)$, which  is equi\-va\-lent to (b) by Corollary \ref{symmetry} and the symmetry of the argument.
\end{proof}\e

An obvious comparison of Corollary \ref{equivOk} with \cite[Lem. 5.13]{defless} shows that the restriction of the equivalence relation $\sim_{\op{ok}}$ to the subset $\op{Dless}(K)\subset\irr(K)$ of defectless polyomials, coincides with the Okutsu equivalence defined in \cite{defless}.\e

%Let us recall the defectless condition, defined in the Introduction.\e

\defn
An $F\in\irr(K)$ is said to be \emph{defectless} if 
$\deg(F)=e(\vb_F/v)f(\vb_F/v)$,
where $\vb_F$ is the valuation on $K_F$ induced by $v_F$.
\e

Vaqui\'e characterized this property as follows  \cite{Vaq2}, \cite[Cor. 6.1]{MLV}. 

\begin{theorem}\label{VaqDef}
An $F\in\irr(K)$ is defectless if and only if $v_F$ is inductive.	
\end{theorem}

Since $v_F$ is an ordinary augmentation of its previous primitive node $\rho_F$, we see that $F$ is defectless if and only if $\rho_F$ is inductive.

Therefore, after identifying $\irr(K)=\lfin(\tq)$ through  (\ref{BijHensel}), Theorem \ref{mainT} yields a bijection between $\op{Dless}(K)/\!\ok$ and the following subset of $\tprim$:
$$
\T\sind:=\left\{(\rho,t)\in\tprim\mid \rho\ \mbox{ is inductive}\right\}.
$$
This subset $\T\sind$ may be easily identified with the Mac Lane space $\M$ of \cite{defless}. Therefore, even in the henselian case, Theorem \ref{mainT} extends \cite[Thm. 5.14]{defless} to a parametrization of Okutsu equivalence classes of arbitrary irreducible polynomials by a certain subset of the tangent space of $\ts$.  %\bs

\section{Okutsu frames over henselian fields}\label{secOkFrames}

We keep assuming that our valued field $(K,v)$ is henselian. Let us fix some $F\in\irr(K)$ of degree $n>1$. We define the \emph{weight} of a non-constant $g\in\kx$  as 
$$
\wt(g):=v_F(g)/\deg(g)\in\gq.
$$

\subsection{Definition of Okutsu frames}\label{subsecDefFrames}

For every integer $1<m\le n$, consider the set
$$
W_m(F)=\left\{\wt(g)\mid g\in\kx\mbox{ monic},\ 0<\deg(g)<m\right\}\subset\gq.
$$
%Clearly, $W_2(F)\subset\cdots\subset W_n(F)$. 
The polynomial $F$ is defectless if and only if all these subsets $W_m(F)$ contain a maximal value \cite{Vaq2}, \cite[Thm. 5.7]{defless}.

In Section \ref{subsecPrimitive}, we defined a certain set $\gsme$ containing $\gq$, and we showed the existence of the following supremums: 
$$w_m(F):=\sup\left(W_m(F)\right)\in\gsme,\qquad 1<m\le n.$$

By Corollary \ref{sup}, if $W_m(F)$ does not contain a maximal element, then $w_m(F)\not\in\gq$.

\begin{lemma}\label{Krasner}
Let $\t\in \kb$ be a root of $F$ in $\kb$ and consider Krasner's constant
$$
\Omega(F)=\max\left\{v(\t-\t')\mid \t'\mbox{ root of }F, \ \t'\ne\t\right\}\in\gq.
$$

If $F$ is separable, then  \ $w_n(F)\le \Omega(F)$.
\end{lemma}

\begin{proof}
Suppose that $\wt(g)>\Omega(F)$ for some monic $g\in\kx$ such that $0<\deg(g)<n$.
Clearly, $\wt(g)=v(g(\t))/\deg(g)$ is the average of all $v(\t-\al)$ for $\al$ running in the multiset $Z(g)$ of roots of $g$ in $\kb$, counting multiplicities. Hence, we must have $v(\t-\al)>\Omega(F)$ for some $\al\in Z(g)$.

By Krasner's lemma, $\t$ is purely inseparable over $K(\al)$. Since $\t$ is separable over $K$, we must have $\t\in K(\al)$ and this contradicts the fact that $\deg(g)<n$.
\end{proof}\e

If $F$ is inseparable and has defect, then the set $W_n(F)$ may be unbounded in $\gq$ in which case, $w_n(F)=\max(\gsme)$ corresponds to the quasicut $(\gq,\emptyset)$. %In an appendix, we illustrate this situation with an example.

\begin{lemma}\label{phirr}
If there exists a maximal element in $W_n(F)$, then any monic $\phi\in\kx$ of minimal degree satisfying \,$\wt(\phi)=w_n(F)$
 is irreducible in $\kx$.
\end{lemma}

\begin{proof}
Suppose $\phi=ab$ for some monic $a,b\in\kx$ with $\deg(a),\deg(b)<\deg(\phi)$. By the minimality of $\deg(\phi)$, we have $\wt(a),\ \wt(b)<w_n(F)$; thus,$$w_n(F)=\wt(\phi)=\dfrac{v_F(a)+v_F(b)}{\deg(\phi)}<\dfrac{\deg(a)w_n(F)+\deg(b)w_n(F)}{\deg(\phi)}=w_n(F),$$which is a contradiction.
\end{proof}\e

Suppose that $\max(W_n(F))$ does not exist. For all weighted values $\be\in W_n$, let $\deg(\be)$ be  the minimal $\ell\in\N\cap[1,n)$ such that there exists a monic $g\in\kx$ of degree $\ell$ such that $\be=\wt(g)$.
Consider the minimal $m\in\N\cap[1,n)$ such that there exists a totally ordered cofinal family of constant degree $m$:
$$\bb=\left(\be_i\right)_{i\in B}\subset W_n(F),\quad \deg(\be_i)=m,\ \forall\,i\in B.$$
We may assume that the set of indices $B$ is well-ordered and the mapping $i\mapsto \be_i$ is an isomorphism of ordered sets between $B$ and our family $\left(\be_i\right)_{i\in B}$.

For all $i\in B$, choose a monic $\chi_i\in\kx$ of degree $m$ such that $\be_i=\wt(\chi_i)$.

%\begin{lemma}\label{BA}
Let $A\subset B$ be  the subset of all indices $i\in B$ such that $\chi_i$ is irreducible. Then the subfamily  $\left(\be_i\right)_{i\in A}$ is a final segment of $\bb$.
%\end{lemma}

%\begin{proof}
Indeed, by the minimality of $m$, there exists $\be_i\in\bb$ such that $\wt(g)<\be_i$ for all monic   
$g\in\kx$ of degree less than $m$.  Then, similar arguments to those used in the proof of Lemma \ref{phirr} show that $\chi_j$ is irreducible for all $j\ge i$. 
%\end{proof}\e

In particular, the family $\left(\be_i\right)_{i\in A}$ is still cofinal in  $W_n(F)$.

Consider the following set of monic irreducible polynomials of constant degree:
$$
\Phi=\begin{cases}
\left\{\chi_i\mid i\in A\right\},& \mbox{if }\, \nexists\max\left(W_n(F)\right),\\
\left\{\phi\right\},&  \wt(\phi)=\max\left(W_n(F)\right), \ \mbox{otherwise}.
\end{cases}
$$

Let $m$ be the constant degree of the polynomials in the set $\Phi$. We may apply this construction to find a set $\Phi'$  of monic irreducible polynomials of constant degree optimizing the weighted values in $W_m(F)$.
An iteration of this procedure leads to a finite sequence of such sets of polynomials
\begin{equation}\label{frame}
\left[\Phi_0,\Phi_1,\dots,\Phi_r,\Phi_{r+1}=\{F\}\right], 
\end{equation}
whose degrees grow strictly:
\begin{equation}\label{frameDeg}
1=m_0<m_1<\cdots<m_{r+1}=n,\qquad m_\ell=\deg\left(\Phi_\ell\right), \ \ 0\le\ell\le r+1,
\end{equation}
and the following property is satisfied: for any index $0\le\ell\le r$ and any monic polynomial $g\in\kx$ with $0<\deg(g)<m_{\ell+1}$, there exists $\phi\in\Phi_\ell$ such that
\begin{equation}\label{propertiesFrame}
\wt(g)\le  \wt(\phi).
\end{equation}

\defn An \emph{Okutsu frame} of $F$ is a list of monic irreducible polynomials as in (\ref{frame}), having degrees as in (\ref{frameDeg}), and satisfying the fundamental property (\ref{propertiesFrame}). \e

Clearly, we may replace each $\Phi_\ell$ with a suitable subset so that the following additional properties are satisfied, for all $0\le\ell\le r$: 

\begin{enumerate}
\item[(OF1)] \ $\#\Phi_\ell=1$ whenever $\max\left(W_{m_{\ell+1}}\right)$ exists.
\item[(OF2)] \ If $\max\left(W_{m_{\ell+1}}\right)$ does not exist, we may consider a total ordering on $\Phi_\ell$ determined by the action of $v_F$:
$$\phi<\phi'\ \sii \ v_F(\phi)<v_F(\phi').
$$
\item[(OF3)] \ For all $\phi\in\Phi_{\ell}$, $\varphi\in\Phi_{\ell+1}$, we have $\wt(\phi)<\wt(\varphi)$.
\end{enumerate}

From now on, we shall assume that our Okutsu frames satisfy these additional properties. Note that $w_{m_1}(F)<\cdots<w_{m_{r+1}}(F)=w_n(F)$ and

$$w_{m_{\ell+1}}(F)=
\begin{cases}
\wt(\phi),&\mbox{ if }\Phi_\ell=\{\phi\},\\
\sup\left\{\wt(\chi_i)\mid i\in A\right\},&\mbox{ if }\Phi_\ell=\{\chi_i\mid i\in A\}.
\end{cases}
$$

\subsection{Okutsu frames and Mac Lane--Vaqui\'e chains}\label{subsecOk=MLV}
In this section, we show that any MLV chain of $v_F$ determines an Okutsu frame of $F$. 

\begin{lemma}\label{maxmax}
Let  $\left[\Phi_0,\Phi_1,\dots,\Phi_r,\{F\}\right]$ be an Okutsu frame of $F$. For $0\le\ell\le r$, let
$$V_{m_\ell}=\left\{v_F(f)\mid f\in \kx\mbox{ monic of degree }m_\ell\right\}.$$
Then, there exists $\max\left(W_{m_{\ell+1}}\right)$ if and only if there exists $\max\left(V_{m_{\ell}}\right)$.
\end{lemma}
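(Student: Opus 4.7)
The plan is to prove each direction directly from the fundamental property of Okutsu frames, using the fact that $m_\ell < m_{\ell+1}$ so values at degree exactly $m_\ell$ automatically sit inside $W_{m_{\ell+1}}$, and that by definition $\#\Phi_\ell=1$ whenever $\max(W_{m_{\ell+1}})$ exists.

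First, for the implication $\max(V_{m_\ell}) \text{ exists} \Rightarrow \max(W_{m_{\ell+1}}) \text{ exists}$: suppose some monic $f$ of degree $m_\ell$ attains $\alpha := \max(V_{m_\ell})$. Since $m_\ell < m_{\ell+1}$, the ratio $\alpha/m_\ell = v_F(f)/\deg(f)$ lies in $W_{m_{\ell+1}}$. For any monic $g$ with $0<\deg(g)<m_{\ell+1}$, the fundamental property (\ref{propertiesFrame}) yields $\phi \in \Phi_\ell$ with $v_F(g)/\deg(g) \le v_F(\phi)/m_\ell$. But $\phi$ is monic of degree $m_\ell$, so $v_F(\phi) \in V_{m_\ell}$ and thus $v_F(\phi) \le \alpha$. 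Hence $v_F(g)/\deg(g) \le \alpha/m_\ell$, proving that $\alpha/m_\ell = \max(W_{m_{\ell+1}})$.

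For the converse, suppose $\max(W_{m_{\ell+1}})$ exists. The last clause of the definition of an Okutsu frame forces $\Phi_\ell = \{\phi\}$ for a single monic irreducible $\phi$ of degree $m_\ell$, and by the discussion following the definition we have $w_{m_{\ell+1}}(F) = v_F(\phi)/m_\ell = \max(W_{m_{\ell+1}})$. Now for any monic $f\in K[x]$ of degree $m_\ell$, the ratio $v_F(f)/m_\ell$ belongs to $W_{m_{\ell+1}}$ (again because $m_\ell < m_{\ell+1}$), so $v_F(f)/m_\ell \le v_F(\phi)/m_\ell$, giving $v_F(f) \le v_F(\phi)$. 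Since $v_F(\phi) \in V_{m_\ell}$ itself, we conclude $v_F(\phi) = \max(V_{m_\ell})$.

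There is essentially no obstacle here: the statement is really bookkeeping, translating between a statement about values at a single degree $m_\ell$ and weighted values at all degrees $<m_{\ell+1}$, and the fundamental property of the frame plus the ``$\#\Phi_\ell=1$ when the max exists'' clause handle both directions symmetrically. The only subtle point to keep in mind is that one must use the \emph{normalized} form of the frame (the additional properties (i), (ii) collected just before the subsection), so that the existence of $\max(W_{m_{\ell+1}})$ is recorded by the cardinality of $\Phi_\ell$.
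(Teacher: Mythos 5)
Your proof is correct and follows essentially the same route as the paper: both directions are pure bookkeeping with the fundamental property, using that weighted values of monic polynomials of degree $m_\ell$ lie in $W_{m_{\ell+1}}$ (the paper's converse direction extracts the witness $\phi\in\Phi_\ell$ directly from the fundamental property rather than invoking the $\#\Phi_\ell=1$ clause, but this is the same argument). One small inaccuracy in your closing remark: the clause ``$\#\Phi_\ell=1$ whenever $\max\left(W_{m_{\ell+1}}\right)$ exists'' is part of the definition of an Okutsu frame itself, not of the additional normalization properties (i), (ii), and in fact neither it nor the normalization is needed for the lemma.
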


\begin{proof}
If $\max\left(W_{m_{\ell+1}}\right)$ exists, then $\Phi_{\ell}=\{\phi\}$ with $\max\left(W_{m_{\ell+1}}\right)=\wt(\phi)$, by the property (\ref{propertiesFrame}). Obviously, $\max\left(V_{m_{\ell}}\right)=v_F(\phi)$.

Suppose now that $\max\left(V_{m_{\ell}}\right)=v_F(\varphi)$ for some monic $\varphi\in\kx$ of degree $m_\ell$. By (\ref{propertiesFrame}), for all monic $f\in \kx$ with $\deg(f)<m_{\ell+1}$ we have
$\wt(f)\le \wt(\phi)$,
for some $\phi\in\Phi_\ell$. Since $v_F(\phi)\le v_F(\varphi)$, we deduce that  $\wt(\varphi)=
\max\left(W_{m_{\ell+1}}\right)$.
\end{proof}

\begin{theorem}\label{MLOk}
Let $F\in\irr(K)$ and consider a MLV chain of $v_F$:
$$
\mu_0\ \stackrel{\phi_1,\ga_1}\lra\  \mu_1\ \lra\ \cdots
\ \stackrel{\phi_{r-1},\ga_{r-1}}\lra\ \mu_{r-1}
\ \stackrel{\phi_{r},\ga_{r}}\lra\ \mu_{r}\ \stackrel{F,\infty}\lra\ \mu_{r+1}=v_F.
$$

Take $\Phi_{r+1}=\{F\}$. For each $0\le\ell\le r$, consider the following set of monic irreducible polynomials of constant degree:
$$
\Phi_\ell=\begin{cases}
\left\{\phi_\ell\right\}, &\mbox{ if }\ \mu_{\ell+1}=[\mu_\ell;\,\phi_{\ell+1},\ga_{\ell+1}]\mbox{ is an ordinary augmentation},\\
\left\{\chi_i\mid i\in A_\ell\right\}, &\mbox{ if }\ \mu_{\ell+1}=[\aa_\ell;\,\phi_{\ell+1},\ga_{\ell+1}]\mbox{ is a limit augmentation},
\end{cases}
$$
where $\aa_\ell=\left(\rho_i\right)_{i\in A_\ell}$ with $\rho_i=[\mu_\ell;\,\chi_i,v_F(\chi_i)]$ for all $i\in A_\ell$.
Then, $\left[\Phi_0,\dots,\Phi_r,\Phi_{r+1}\right]$ is an Okutsu frame of $F$. %For all $0\le\ell\le r$, the frame has degrees $\,\deg(\Phi_\ell)=m_\ell:=\deg(\phi_\ell)$, and maximal values$$w_{m_{\ell+1}}(F)=\begin{cases}\ga_\ell/m_\ell, &\mbox{ if }\ \Phi_\ell=\left\{\phi_\ell\right\},\\\sup\left\{\sval(\rho_i)\mid i\in A_\ell\right\}/m_\ell&\mbox{ if }\ \Phi_\ell=\left\{\chi_i\mid i\in A_\ell\right\}.\end{cases}$$
\end{theorem}

\begin{proof}
Let us first prove that the fundamental property (\ref{propertiesFrame}) holds for all $0\le \ell\le r$.

Suppose that $\mu_\ell\to\mu_{\ell+1}$ is an ordinary augmentation.
By the definition of a MLV chain, we have $m_\ell=\deg(\phi_\ell)<m_{\ell+1}=\deg(\phi_{\ell+1})$. Hence, for all monic $g\in\kx$ of degree $0<\deg(g)<m_{\ell+1}$, we have simultaneously $\phi_{\ell+1}\nmid_{\mu_\ell}\phi_\ell$ and $\phi_{\ell+1}\nmid_{\mu_\ell}g$. Since $\ty(\mu_\ell,v_F)=\left[\phi_{\ell+1}\right]_{\mu_\ell}$, Lemma \ref{propertiesTMN} shows that
$\mu_\ell(\phi_\ell)=v_F(\phi_\ell)$ and $\mu_\ell(g)=v_F(g)$.

Now, since $\phi_\ell$ is a key polynomial for $\mu_\ell$, Theorem \ref{univbound} implies
$$
\wt(g)=\dfrac{\mu_\ell(g)}{\deg(g)}\le \dfrac{\mu_\ell(\phi_\ell)}{m_\ell}=\wt(\phi_\ell).
$$
%Moreover, $\deg(\Phi_\ell)=\deg(\phi_\ell)$ and $w_{m_{\ell+1}}(F)=\mu_\ell(\phi_\ell)/m_\ell=\ga_\ell/m_\ell$. 
This ends the proof concerning all ordinary augmentation steps.

Suppose that $\mu_\ell\to\mu_{\ell+1}$ is a limit augmentation.  We mimic the above arguments, just by replacing the pair $\mu_\ell,\phi_\ell$ with the pair $\rho_i,\chi_i$ for a sufficiently large $i\in A_\ell$.

Let $\mal=[\aa_\ell;\,\phi_{\ell+1},\gaal]$ be the minimal limit augmentation of the essential continuous family $\aa_\ell$.
By the definition of a MLV chain, we have $$m_\ell=\deg(\chi_i)<m_{\ell+1}=\deg(\phi_{\ell+1}) \ \mbox{ for all }i\in A_\ell.$$ Hence, for all monic $g\in\kx$ of degree $0<\deg(g)<m_{\ell+1}$, we have simultaneously $\phi_{\ell+1}\nmid_{\mu_\ell}\chi_i$ and $\phi_{\ell+1}\nmid_{\mu_\ell}g$. Since $\ty(\mal,v_F)=\left[\phi_{\ell+1}\right]_{\mal}$, Lemma \ref{propertiesTMN} shows that
$$
\mal(g)=v_F(g),\qquad\mal(\chi_i)=v_F(\chi_i) \ \mbox{ for all }i\in A_\ell.
$$
On the other hand, $m_{\ell+1}=\dgi(\aa_\ell)$ is the unstable degree of $\aa_\ell$. Hence, $g$ and $\chi_i$ are $\aa_\ell$-stable. Take a sufficiently large $i$ such that
$$
\rho_i(g)=\ral(g)=\mal(g)=v_F(g).
$$
By \cite[Lem. 4.12]{VT}, we may assume that $\chi_j\nmid_{\rho_i} \chi_i$ for all $j>i$. This implies
$$
\rho_i(\chi_i)=\rho_j(\chi_i)=\ral(\chi_i)=\mal(\chi_i)=v_F(\chi_i),
$$
as well. Since $\chi_i$ is a key polynomial for $\rho_i$, Theorem \ref{univbound} implies
$$
\wt(g)=\dfrac{\rho_i(g)}{\deg(g)}\le \dfrac{\rho_i(\chi_i)}{m_\ell}=\wt(\chi_i).
$$
%Moreover, $\deg(\Phi_\ell)=\deg(\chi_i)=m_\ell$ and$$w_{m_{\ell+1}}(F)=\sup\left\{v_F(\chi_i)/m_\ell\mid i\in A_\ell\right\}=\sup\left\{\sval(\rho_i)\mid i\in A_\ell\right\}/m_\ell.$$
This ends the proof of (\ref{propertiesFrame}).

Finally, by \cite[Thm. 4.7]{MLV}, the augmentation step $\mu_\ell\to\mu_{\ell+1}$ is ordinary if and only if the set $V_{m_\ell}$ contains a maximal element. By Lemma \ref{maxmax}, $\#\Phi_\ell=1$ if and only if $W_{m_{\ell+1}}$  contains a maximal element. 
\end{proof}\e

In particular, the length $r+1$ of this Okutsu frame of $F$ is equal to the Mac Lane--Vaqui\'e depth of $v_F$.

Conversely, all Okutsu frames of $F$ arise in this way.

\begin{theorem}\label{OkML}
Let $\left[\Phi_0,\dots,\Phi_r,\Phi_{r+1}=\{F\}\right]$ be an Okutsu frame of $F\in\irr(K)$. For all $0\le \ell\le r+1$, choose an arbitrary $\,\phi_\ell\in\Phi_\ell$ and denote $\ga_\ell=v_F(\phi_\ell)$. Then, the truncation of $v_F$ by $\phi_\ell$ is a valuation $\mu_\ell$ fitting into a MLV chain of $v_F$:
$$\mu_0\ \stackrel{\phi_1,\ga_1}\lra\  \mu_1\ \lra\ \cdots
\ \stackrel{\phi_{r-1},\ga_{r-1}}\lra\ \mu_{r-1}
\ \stackrel{\phi_{r},\ga_{r}}\lra\ \mu_{r}\ \stackrel{F,\infty}\lra\ \mu_{r+1}=v_F.
$$

If $\Phi_\ell=\left\{\phi_\ell\right\}$, then $\mu_{\ell+1}=[\mu_\ell;\,\phi_{\ell+1},\ga_{\ell+1}]$ is an ordinary augmentation.

If $\Phi_\ell=\left\{\chi_i\mid i\in I_\ell\right\}$, then $\mu_{\ell+1}=[\aa_\ell;\,\phi_{\ell+1},\ga_{\ell+1}]$ is a limit augmentation with res\-pect to the essential continuous family $\aa_\ell=\{\mu_\ell\}\cup\left(\rho_i\right)_{i\in A_\ell}$, where $\rho_i=[\mu_\ell;\,\chi_i,v_F(\chi_i)]$ and $A_\ell\subset I_\ell$ contains the indices  $i$ such that $v_F(\chi_i)>\ga_\ell$.
\end{theorem}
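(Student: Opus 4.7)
The plan is to induct on $\ell$ from $0$ to $r+1$, showing simultaneously that $\mu_\ell=(v_F)_{\phi_\ell}$ is a valuation, that $\phi_\ell$ is a key polynomial of minimal degree for $\mu_\ell$ with $\mu_\ell(\phi_\ell)=v_F(\phi_\ell)$, and that the augmentation $\mu_{\ell-1}\to\mu_\ell$ has the form claimed in the theorem. The base case is immediate: since $\phi_0$ has degree one, writing $\phi_0=x-a$, the truncation $(v_F)_{\phi_0}$ is the depth-zero valuation $\om_{a,v_F(\phi_0)}$, which has degree $1$ and admits $x-a$ as a key polynomial of minimal degree.

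For the inductive step I split into two cases according to the cardinality of $\Phi_\ell$. In the ordinary case $\Phi_\ell=\{\phi_\ell\}$, Lemma \ref{maxmax} yields $\wt(\mu_\ell)=v_F(\phi_\ell)/m_\ell=\max(W_{m_{\ell+1}})$. The fundamental property of the frame combined with Theorem \ref{univbound} then gives the key auxiliary fact $\mu_\ell(g)=v_F(g)$ for every $g\in\kx$ of degree less than $m_{\ell+1}$. Since $\phi_{\ell+1}$ is monic irreducible of minimal degree satisfying $v_F(\phi_{\ell+1})/m_{\ell+1}>\wt(\mu_\ell)$, another application of Theorem \ref{univbound} forces $\phi_{\ell+1}$ to be a key polynomial for $\mu_\ell$. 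Comparing $\phi_{\ell+1}$-expansions then shows $[\mu_\ell;\phi_{\ell+1},v_F(\phi_{\ell+1})]=(v_F)_{\phi_{\ell+1}}=\mu_{\ell+1}$, because each coefficient has degree smaller than $m_{\ell+1}$ and is thus evaluated identically by $\mu_\ell$ and $v_F$.

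In the limit case $\Phi_\ell=\{\chi_i\mid i\in I_\ell\}$ the values $v_F(\chi_i)/m_\ell$ form a cofinal strictly increasing family in $W_{m_{\ell+1}}$ with no maximum. By the argument of the ordinary case, for each $i\in A_\ell$ the polynomial $\chi_i$ is a key polynomial for $\mu_\ell$, so $\rho_i=[\mu_\ell;\chi_i,v_F(\chi_i)]$ is a valid valuation and the family $\aa_\ell=(\rho_i)_{i\in A_\ell}$ is totally ordered, of constant degree $m_\ell$, with no maximal element. To conclude that $\aa_\ell$ is essential continuous and that $\phi_{\ell+1}$ is a limit key polynomial for it, I show two things: every monic polynomial $g$ of degree less than $m_{\ell+1}$ is $\aa_\ell$-stable (its weighted $v_F$-value is dominated by some $v_F(\chi_i)/m_\ell$ by the fundamental property, whence its $\chi_i$-expansion valuations stabilize); and $\phi_{\ell+1}$ is $\aa_\ell$-unstable, because $v_F(\phi_{\ell+1})/m_{\ell+1}$ strictly exceeds every $v_F(\chi_i)/m_\ell$ by the property (ii) arranged on the Okutsu frame. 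This forces $\dgi(\aa_\ell)=m_{\ell+1}>m_\ell$ and $\phi_{\ell+1}\in\kpi(\aa_\ell)$. The identification $[\aa_\ell;\phi_{\ell+1},v_F(\phi_{\ell+1})]=(v_F)_{\phi_{\ell+1}}=\mu_{\ell+1}$ then follows by comparing $\phi_{\ell+1}$-expansions, using that the stability function of $\aa_\ell$ agrees with $v_F$ on polynomials of degree less than $m_{\ell+1}$.

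Finally, the MLV chain conditions are verified directly: $\deg(\mu_0)=1$ by the base case; in every ordinary step $\deg(\mu_\ell)=m_\ell<m_{\ell+1}=\deg(\mu_{\ell+1})$; and in every limit step $\deg(\aa_\ell)=m_\ell=\deg(\mu_\ell)$, while $\deg(\phi_\ell)=m_\ell<m_{\ell+1}=\deg(\phi_{\ell+1})$ prevents any $\mu_\ell$-equivalence between $\phi_\ell$ and $\phi_{\ell+1}$, hence $\phi_\ell\notin\ty(\mu_\ell,\mu_{\ell+1})$. I expect the genuine difficulty to be concentrated in the limit case, specifically in transferring the external cofinality and minimality properties of the Okutsu frame (which are statements about weighted $v_F$-values) into the intrinsic stability statements about $\aa_\ell$ needed to certify it as essential continuous and to identify $\phi_{\ell+1}$ as a limit key polynomial.
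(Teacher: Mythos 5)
Your plan is the same as the paper's: induct on $\ell$, carrying along the facts that $\mu_\ell=(v_F)_{\phi_\ell}$ is a valuation admitting $\phi_\ell$ as a key polynomial of minimal degree with $\mu_\ell(\phi_\ell)=v_F(\phi_\ell)$, splitting into the cases $\#\Phi_\ell=1$ and $\#\Phi_\ell>1$, and identifying the next truncation with an ordinary (resp.\ limit) augmentation by comparing $\phi_{\ell+1}$-expansions. However, the crux of the limit case is asserted rather than proved. You state that every monic $g$ with $\deg(g)<m_{\ell+1}$ is $\aa_\ell$-stable because its weighted value is dominated by some $v_F(\chi_i)/m_\ell$, ``whence its expansion valuations stabilize''; this implication is precisely the nontrivial step (you flag it yourself as the genuine difficulty and then leave it undone). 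The mechanism the paper uses is: reduce to $g$ irreducible, and if $\rho_i(g)<\rho_j(g)\le v_F(g)$ for some $j>i$, then on one hand $\rho_i(g)/\deg(g)<v_F(g)/\deg(g)\le\wt(\rho_i)$, so $g$ is not $\rho_i$-minimal by Theorem \ref{univbound}, while on the other hand $\chi\mid_{\rho_i}g$ for $\chi\in\ty(\rho_i,v_F)$ by Lemma \ref{propertiesTMN}, so $g$ \emph{is} $\rho_i$-minimal by Theorem \ref{mu<F} (this is where henselianity enters), a contradiction. The same machinery is needed to justify your claims that the stability function agrees with $v_F$ below degree $m_{\ell+1}$, and that $\rho_i(\phi_{\ell+1})<v_F(\phi_{\ell+1})$ for all $i$ actually forbids stabilization of the values $\rho_i(\phi_{\ell+1})$ (the paper invokes \cite[Cor.\ 2.5]{MLV} for this). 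A smaller instance of the same looseness occurs in the ordinary case: Theorem \ref{univbound} only yields $\mu_\ell$-minimality, not $\mu_\ell$-irreducibility, so to conclude that $\phi_{\ell+1}$ is a key polynomial you must combine your auxiliary fact (that $m_{\ell+1}$ is the least degree with $\mu_\ell(g)<v_F(g)$) with Lemma \ref{propertiesTMN},(ii).

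In addition, your verification of the MLV-chain condition $\phi_\ell\notin\ty(\mu_\ell,\mu_{\ell+1})$ in the limit case is incorrect as stated. You argue that $\deg(\phi_\ell)=m_\ell<m_{\ell+1}=\deg(\phi_{\ell+1})$ prevents $\mu_\ell$-equivalence of $\phi_\ell$ and $\phi_{\ell+1}$; but $\ty(\mu_\ell,\mu_{\ell+1})$ is not the class of $\phi_{\ell+1}$: it coincides with $\ty(\mu_\ell,v_F)=[\chi_i]_{\mu_\ell}$, a tangent direction of degree $m_\ell=\deg(\phi_\ell)$, so degree considerations cannot exclude $\phi_\ell$ from it. The correct argument, as in the paper, is that $\mu_\ell(\phi_\ell)=v_F(\phi_\ell)$ (automatic, since $\mu_\ell$ is the truncation of $v_F$ by $\phi_\ell$), which by Lemma \ref{propertiesTMN} shows $\phi_\ell\notin\ty(\mu_\ell,v_F)=\ty(\mu_\ell,\mu_{\ell+1})$. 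Both defects are repairable with tools you already cite, but as written the proposal omits the key henselian input in the limit case and misjustifies one of the two conditions that make the constructed chain an MLV chain.
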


\begin{proof}
Let $\phi_0=x-a$ for some $a\in K$. Since $\supp(v_F)=F\kx$ and $\deg(F)>1$, we have $\ga_0:=v_F(\phi_0)<\infty$. Thus,
the truncation of $v_F$ by $\phi_0$ is the depth-zero valuation $\mu_0=\om_{a,\ga_0}\le v_F$. Since $\mu_0$ has trivial support, we have $\mu_0<v_F$ and $\phi_0$ is a key polynomial for $\mu_0$ of minimal degree.
Also, the equality $\mu_0(\phi_0)=\ga_0=v_F(\phi_0)$ shows that $\phi_0\not\in\ty(\mu_0,v_F)$.

Now, suppose that for some $0\le \ell\le r$ we have constructed a MLV chain of the valuation $\mu_\ell$:
$$\mu_0\ \stackrel{\phi_1,\ga_1}\lra\  \mu_1\ \lra\ \cdots
\ \stackrel{\phi_{\ell},\ga_{\ell}}\lra\ \mu_{\ell}<v_F,
$$
satisfying all conditions of the theorem, for all indices $0,\dots,\ell$. Since $\mu_\ell\left(\phi_\ell\right)=\ga_\ell=v_F\left(\phi_\ell\right)$, we have $\phi_\ell\not\in\ty(\mu_\ell,v_F)$.

Let us construct a further step of the MLV chain, satisfying the conditions of the theorem for the index $\ell+1$ as well.

Denote $m_\ell=\deg(\phi_\ell)$ for all $0\le \ell\le r$.
 Suppose that $\Phi_\ell=\left\{\phi_\ell\right\}$. %and choose any $\phi_{\ell+1}\in \Phi_{\ell+1}$ such that$$\dfrac{v_F(\phi_{\ell})}{m_{\ell}}<\dfrac{v_F(\phi_{\ell+1})}{m_{\ell+1}}.$$
Since $\phi_\ell$ is a key polynomial for $\mu_\ell$ of  minimal degree,  Theorem \ref{univbound} shows that
$$
\dfrac{\mu_\ell(\phi_{\ell+1})}{m_{\ell+1}}\le \dfrac{\mu_\ell(\phi_{\ell})}{m_{\ell}}=\wt(\phi_{\ell})<\wt(\phi_{\ell+1}).
$$
Hence, $\mu_\ell(\phi_{\ell+1})<v_F(\phi_{\ell+1})$.
We claim that $m_{\ell+1}$ is the least degree of a monic polynomial in $\kx$ satisfying this inequality.
Indeed, suppose that $g\in\kx$ is a monic polynomial of minimal degree such that $\mu_\ell(g)<v_F(g)$. By Lemma \ref{propertiesTMN}, $g$ is a key polynomial for $\mu_\ell$; hence, it is $\mu_\ell$-minimal and  Theorem \ref{univbound} shows that
$$
\dfrac{\mu_\ell(g)}{\deg(g)}= \dfrac{\mu_\ell(\phi_{\ell})}{m_{\ell}}=\wt(\mu_\ell).
$$
If  $\deg(g)<m_{\ell+1}$,  the fundamental property (\ref{propertiesFrame}) would imply:
$$
\wt(g)\le\wt(\phi_{\ell})=\dfrac{\mu_\ell(\phi_{\ell})}{m_{\ell}}=\dfrac{\mu_\ell(g)}{\deg(g)},
$$
contradicting our initial assumption.

Thus, $\phi_{\ell+1}\in\kx$ is a monic polynomial of minimal degree satisfying $\mu_\ell(\phi_{\ell+1})<v_F(\phi_{\ell+1})$. By Lemma \ref{propertiesTMN}, $\phi_{\ell+1}$ is a key polynomial for $\mu_\ell$. The ordinary augmentation
$$
\mu_{\ell+1}:=[\mu_\ell; \phi_{\ell+1},\ga_{\ell+1}],\quad \ga_{\ell+1}=v_F\left(\phi_{\ell+1}\right),
$$
satisfies $\mu_\ell<\mu_{\ell+1}\le v_F$. If we add the augmentation step $\mu_\ell\to\mu_{\ell+1}$ to the previous MLV chain, we obtain a MLV chain of $\mu_{\ell+1}$, because
 $$\deg(\mu_\ell)=m_\ell<m_{\ell+1}=\deg(\mu_{\ell+1}).$$ 
If $\ell=r$, then $\phi_{\ell+1}=F$ and $\ga_{\ell+1}=\infty$, so that  $\mu_{\ell+1}=v_F$ and the theorem is proven.

If $\ell<r$, then $m_{\ell+1}<\deg(F)$ and this implies $\ga_{\ell+1}<\infty$. In this case, $\mu_{\ell+1}$ has trivial support and $\mu_{\ell+1}<v_F$. Also, $\phi_{\ell+1}$ is a key polynomial for $\mu_{\ell+1}$ of minimal degree. By Lemma \ref{minimal0}, the truncation of $v_F$ by $\phi_{\ell+1}$ is equal to:
$$
\left(v_F\right)_{\phi_{\ell+1}}= \left(\mu_{\ell+1}\right)_{\phi_{\ell+1}}=\mu_{\ell+1}.
$$
Finally, since $\mu_{\ell+1}\left(\phi_{\ell+1}\right)=\ga_{\ell+1}=v_F\left(\phi_{\ell+1}\right)$, we have necessarily $\phi_{\ell+1}\not\in\ty(\mu_{\ell+1},v_F)$. This ends the recurrence step in the case $\Phi_\ell=\left\{\phi_\ell\right\}$.

Now, suppose that $\Phi_\ell=\left\{\chi_i\mid i\in I\right\}$ and let $i_0\in I$ be the index for which $\phi_\ell=\chi_{i_0}$.

By our recurrence assumption, $\mu_\ell$ is a valuation and $\phi_\ell$ is a key polynomial for $\mu_\ell$ of minimal degree. Since $\deg\,\ty(\mu_\ell,v_F)\ge \deg(\mu_\ell)=m_\ell$, Lemma \ref{propertiesTMN} shows that
$$
\mu_\ell(a)=v_F(a)\quad \mbox{ for all }a\in\kx,\ \mbox{ with }\deg(a)<m_\ell,
$$
because $\phi\nmid_{\mu_\ell}a$ for any $\phi\in\ty(\mu_\ell,v_F)$.

Denote $\be_i=v_F(\chi_i)$ for all $i\in I$. By the additional property (i) of the Okutsu frame, we have $\be_{i_0}<\be_i$ for all $i_0<i$ in $I$. Since all $\chi_i\in\kx$ are monic polynomials of degree $m_\ell$, we deduce that
$$
\mu_\ell\left(\chi_{i_0}-\chi_i\right)=v_F\left(\chi_{i_0}-\chi_i\right)=\be_{i_0}=\sval(\mu_\ell)\quad\mbox{ for all }i_0<i.
$$
By \cite[Prop. 6.3]{KP}, all these $\chi_i\in\Phi_\ell$ are key polynomials for $\mu_\ell$ of degree $m_\ell$. Hence,  %, and they satisfy$$\chi_i\not\sim_{\mu_\ell}\phi_\ell,\quad \chi_i\sim_{\mu_\ell}\chi_j\quad\mbox{ for all }\,i_0<i<j.$$In particular, since $\mu_\ell(\chi_i)=\be_{i_0}<\be_i=v_F(\chi_i)$, Lemma \ref{propertiesTMN} shows that$$\ty(\mu_\ell,v_F)=\left[\chi_i\right]_{\mu_\ell}\quad\mbox{ for all }\,i>i_0. $$
we may consider the family of ordinary augmentations
$$
\rho_i=[\mu_\ell;\, \chi_i,\be_i]\quad\mbox{ for all }i>i_0.
$$
By comparing their action of $\chi_j$-expansions, we clearly have
$$
\mu_\ell<\rho_i<\rho_j<v_F\quad\mbox{ for all }\,i_0<i<j\,\mbox{ in }I.
$$
%Arguing as above, Lemmas \ref{sameDeg} and \ref{propertiesTMN} show that $$\ty(\rho_i,v_F)=\left[\chi_j\right]_{\rho_i}\quad\mbox{ for all }\,j>i. $$

Denote $\rho_{i_0}:=\mu_\ell$.
For the totally ordered set of indices $A=I_{\ge i_0}$, we may consider a continuous family of valuations $\aa=\left(\rho_i\right)_{i\in A}$ of stable degree $m_\ell$.

Let us show that $\phi_{\ell+1}$ is $\aa$-unstable. For all $i\in A$,  Theorem \ref{univbound} shows that
$$
\dfrac{\rho_i\left(\phi_{\ell+1}\right)}{m_{\ell+1}}\le\dfrac{\sval(\rho_i)}{m_{\ell}}=\dfrac{\be_i}{m_{\ell}}<\wt(\phi_{\ell+1}),
$$
the last inequality by the additional property (OF3) of the Okutsu frame.
Hence, $\rho_i\left(\phi_{\ell+1}\right)<v_F(\phi_{\ell+1})$. By \cite[Cor. 2.5]{MLV}, this implies
$$
\rho_i\left(\phi_{\ell+1}\right)<\rho_j\left(\phi_{\ell+1}\right)\quad\mbox{ for all }\,i<j,
$$
so that $\phi_{\ell+1}$ is $\aa$-unstable. 

Let us now show that $m_{\ell+1}$ is the minimal degree of $\aa$-unstability; that is, all monic $g\in\kx$ with $\deg(g)<m_{\ell+1}$ are $\aa$-stable. Since the product of $\aa$-stable polynomials is $\aa$-stable, we may assume that $g$ is irreducible. By the fundamental property (\ref{propertiesFrame}), there exists $\chi_i\in\Phi_\ell$ such that
\begin{equation}\label{partial}
\wt(g)\le\wt(\chi_i)=\dfrac{\be_i}{m_{\ell}}=\wt(\rho_i).
\end{equation}
We claim that $\rho_i(g)=\rho_j(g)$ for all $j>i$. Indeed, suppose that $\rho_i(g)<\rho_j(g)\le v_F(g)$ for some $j>i$. On one hand, from (\ref{partial}) we deduce
$$
\dfrac{\ri(g)}{\deg(g)}<\wt(g)\le\wt(\rho_i),
$$
so that $g$ is not $\ri$-minimal, by Theorem \ref{univbound}. This contradicts Corollary \ref{muminimal}.

Therefore, $\phi_{\ell+1}$ is a monic $\aa$-unstable polynomial of minimal degree. In other words, $\phi_{\ell+1}\in\kpi(\aa)$.
The limit augmentation
$$
\mu_{\ell+1}:=[\aa; \phi_{\ell+1},\ga_{\ell+1}],\quad \ga_{\ell+1}=v_F\left(\phi_{\ell+1}\right),
$$
satisfies $\mu_{\ell+1}\le v_F$. Since $\phi_\ell\not\in\ty(\mu_\ell,\mu_{\ell+1})=\ty(\mu_\ell,v_F)$, if we add the augmentation step $\mu_\ell\to\mu_{\ell+1}$ to the previous MLV chain, we obtain a MLV chain of $\mu_{\ell+1}$.

If $\ell=r$, we have $\phi_{\ell+1}=F$ and $\ga_{\ell+1}=\infty$, so that  $\mu_{\ell+1}=v_F$ and the theorem would be proven.

If $\ell<r$, then $m_{\ell+1}<\deg(F)$ and this implies $\ga_{\ell+1}<\infty$. In this case, $\mu_{\ell+1}$ has trivial support and $\mu_{\ell+1}<v_F$. Also, $\phi_{\ell+1}$ is a key polynomial for $\mu_{\ell+1}$ of minimal degree. By Lemma \ref{minimal0}, the truncation of $v_F$ by $\phi_{\ell+1}$ is equal to:
$$
\left(v_F\right)_{\phi_{\ell+1}}= \left(\mu_{\ell+1}\right)_{\phi_{\ell+1}}=\mu_{\ell+1}.
$$
Finally, since $\mu_{\ell+1}\left(\phi_{\ell+1}\right)=\ga_{\ell+1}=v_F\left(\phi_{\ell+1}\right)$, we have necessarily $\phi_{\ell+1}\not\in\ty(\mu_{\ell+1},v_F)$. This ends the recurrence step in the case $\Phi_\ell=\left\{\chi_i\mid i\in I\right\}$. %This ends the proof of the theorem.
\end{proof}%\e

\begin{corollary}\label{divide}
Let $\left[\Phi_0,\dots,\Phi_r,\Phi_{r+1}=\{F\}\right]$ be an Okutsu frame of some  $F\in\irr(K)$. Then, $1=m_0\mid m_1\mid\cdots\mid m_r\mid \deg(F)$.
\end{corollary}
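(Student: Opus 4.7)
The plan is to derive the divisibility directly from the MLV chain that Theorem \ref{OkML} attaches to the given Okutsu frame. Choose arbitrary $\phi_\ell\in\Phi_\ell$ for $0\le\ell\le r+1$, with $\Phi_{r+1}=\{F\}$. Theorem \ref{OkML} produces a MLV chain
$$
\mu_0\ \stackrel{\phi_1,\ga_1}\lra\ \mu_1\ \lra\ \cdots\ \lra\ \mu_r\ \stackrel{F,\infty}\lra\ \mu_{r+1}=v_F,
$$
in which each $\phi_\ell$ is a key polynomial of $\mu_\ell$ of minimal degree. Consequently $\deg(\mu_\ell)=\deg(\phi_\ell)=m_\ell$ for $0\le\ell\le r$, and we also have $\deg(F)=n$ for the final augmenting polynomial.

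Next I invoke the standard MacLane--Vaqui\'e divisibility principle: for every augmentation step of a MLV chain, the degree of the augmenting polynomial is a multiple of the degree of the previous node. Concretely, applied to each step $\mu_\ell\to\mu_{\ell+1}$:
\begin{itemize}
\item If the step is ordinary, then $\phi_{\ell+1}\in\kp(\mu_\ell)$, and any key polynomial for $\mu_\ell$ has degree divisible by $\deg(\mu_\ell)=m_\ell$; thus $m_\ell\mid m_{\ell+1}$.
\item If the step is limit with respect to $\aa_\ell$, then $\phi_{\ell+1}\in\kpi(\aa_\ell)$ has degree equal to the limit degree $\dgi(\aa_\ell)$, which is a multiple of the stable degree $\deg(\aa_\ell)=m_\ell$; again $m_\ell\mid m_{\ell+1}$.
\end{itemize}
Iterating this from $\ell=0$ through $\ell=r$ (the final step using $\phi_{r+1}=F$ of degree $n$) yields
$$
1=m_0\mid m_1\mid\cdots\mid m_r\mid n,
$$
where $m_0=1$ since the initial polynomials $\phi_0=x-a$ have degree one.

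The only substantive point is justifying the divisibility of degrees under a single augmentation. In the ordinary case this is the classical fact that all key polynomials of $\mu$ have degree a multiple of $\deg(\mu)$, proved by using $\mu$-minimality and $\phi$-expansions; I would simply cite the corresponding statement from \cite{KP}. The limit case reduces to the same fact applied to the valuations $\rho_i\in\aa_\ell$: every polynomial of degree strictly less than $m_\ell$ is of degree smaller than $\deg(\rho_i)$, hence is $\rho_i$-stable by minimality arguments, so the minimal unstable degree $\dgi(\aa_\ell)$ is forced to be a multiple of $m_\ell$. Since these facts are entirely internal to MacLane--Vaqui\'e theory and already embedded in the formulation of MLV chains used throughout the paper, the corollary reduces to a one-line invocation once Theorem \ref{OkML} has been established.
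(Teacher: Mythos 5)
Your reduction to Theorem \ref{OkML} is the same first move as the paper's, and your treatment of the ordinary steps is essentially fine: in an MLV chain all intermediate nodes are residually transcendental (or incommensurable), and for such a valuation $\mu$ every key polynomial has degree a multiple of $\deg(\mu)$, a fact that can indeed be quoted from \cite{KP}. The gap is in the limit steps. Your argument there is a non sequitur: showing that every monic polynomial of degree $<m_\ell$ is $\aa_\ell$-stable only proves that the minimal unstable degree satisfies $\dgi(\aa_\ell)>m_\ell$ (which is already part of the definition of an essential continuous family); it gives no information whatsoever about divisibility. You assert that $\dg(\aa_\ell)\mid\dgi(\aa_\ell)$ is a ``standard MacLane--Vaqui\'e divisibility principle, entirely internal to the theory,'' but no such general principle is available: the relation between the stable degree and the limit degree of an essential continuous family is exactly the delicate point, and it is not a formal consequence of $\mu$-minimality or of $\phi$-expansions.

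This is also where your proof diverges from the paper's, and where the omission matters: the paper's proof explicitly invokes the hypothesis that $(K,v)$ is henselian, deducing the integrality of all jumps $m_{\ell+1}/m_\ell$ from Vaqui\'e's results on essential families and defect \cite{Vaq2}, as described in \cite[Sec.\ 6]{MLV} (for henselian base fields the jump at an ordinary step is $ef$ and at a limit step is $efd$, with $d$ the defect, hence an integer in both cases). Your argument never uses henselianity, which should have been a warning sign, since for the limit steps the divisibility is obtained precisely from henselian-specific defect theory rather than from generalities about key polynomials. To repair the proof you must either cite these results of Vaqui\'e (equivalently \cite[Sec.\ 6]{MLV}) for the limit augmentations, or supply an actual argument that $m_\ell$ divides $\dgi(\aa_\ell)$ under the standing henselian hypothesis; the stability observation you make does not do this.
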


\begin{proof}
By Theorem \ref{OkML}, $1=m_0,\dots,m_{r+1}=\deg(F)$ are the degrees of a MLV chain of $v_F$. Since $(K,v)$ is henselian,  all jumps $m_{\ell+1}/m_\ell$ take integer values.
Indeed, this follows from the results of Vaqui\'e \cite{Vaq2}, described in \cite[Sec. 6]{MLV} as well.
\end{proof}

\subsection{Computation of ramification indices, residual degrees and defect}
Since $(K,v)$ is henselian, the valuation $\vb_F$ is the only extension of $v$ to the field extension  $K_F=\kx/(F)$.
By Ostrowski's lemma, we have
$$
\deg(F)=e(F)f(F)d(F),
$$
where $e(F)=e(\vb_F/v)$ is the ramification index, $f(F)=f(\vb_F/v)$ the residual degree and $d(F)=d(\vb_F/v)$ the defect of $\vb_F/v$. Also, Ostrowski showed that $d(F)$ is a power of the exponent characeristic, defined as  
$$
p=
\begin{cases}
\chr(k),&\mbox{ if }\chr(k)>0,\\
1,&\mbox{ if }\chr(k)=0.
\end{cases}
$$

In \cite[Sec.6]{MLV}, it is shown how to compute these invariants $e(F)$, $f(F)$, $d(F)$ in terms of a MLV chain of $v_F$. Hence, as a consequence of Theorem \ref{OkML}, we can compute them in terms of an Okutsu frame.

Let $\left[\Phi_0,\dots,\Phi_r,\Phi_{r+1}=\{F\}\right]$ be an Okutsu frame of $F\in\irr(K)$. Consider the MLV chain of $v_F$ described in Theorem \ref{OkML}:
$$\mu_0\ \stackrel{\phi_1,\ga_1}\lra\  \mu_1\ \lra\ \cdots
\ \stackrel{\phi_{r-1},\ga_{r-1}}\lra\ \mu_{r-1}
\ \stackrel{\phi_{r},\ga_{r}}\lra\ \mu_{r}\ \stackrel{F,\infty}\lra\ v_F.
$$
Recall that $\ga_\ell=v_F(\phi_\ell)$ for all $\ell\ge0$.

By the MLV condition, this chain induces a chain of abelian groups:
$$
\g_{\mu_{-1}}:=\g\,\subset\,\g_{\mu_0}\,\subset\,\g_{\mu_1}\,\subset\,\cdots\,\g_{\mu_r}=\g_{v_F}=\g_{\vb_F},
$$
with $\g_{\mu_\ell}=\gen{\g,\ga_0,\dots,\ga_\ell}$ for all $0\le\ell\le r$ \cite[Sec. 4.1]{MLV}. Hence,
$$
e(F)=\left(\g_{\vb_F}\colon\g\right)=\left(\g_{\mu_r}\colon\g\right)=e_0\cdots e_r,
$$
where $e_\ell=\left(\g_{\mu_\ell}\colon\g_{\mu_{\ell-1}}\right)$ for all $0\le\ell\le r$. 

Thus, each index $e_\ell$ can be computed as the least positive integer $e$ such that $$e\,v_F(\phi_\ell)\in\gen{\g,v_F(\phi_0),\dots,v_F(\phi_{\ell-1})}.
$$

On the other hand, Vaqui\'e proved in \cite{Vaq2} that $d(F)=d_1\cdots d_r$, where
$$
d_\ell=\begin{cases}
1,&\mbox{ if }\mu_{\ell}\to\mu_{\ell+1}\mbox{ is an ordinary augmentation},\\
m_{\ell+1}/m_\ell,&\mbox{ if }\mu_{\ell}\to\mu_{\ell+1}\mbox{ is a limit augmentation},
\end{cases}
$$
for all $0\le\ell\le r$. Equivalently, 
$$
d_\ell=1\sii\#\Phi_\ell=1\sii \exists\max(W_{m_{\ell+1}}).
$$  
Thus, $d(F)$ may be computed solely in terms of the Okutsu frame too.

Finally, $f(F)=\deg(F)/e(F)d(F)$ is determined by  $e(F)$ and $d(F)$.

\subsection{Okutsu frames and abstract key polynomials}\label{subsecHOSKP}

Abstract key polynomials were introduced by Herrera-Olalla-Spivakovsky as an alternative approach to the methods of Mac Lane and Vaqui\'e, aiming at a thorough comprehension of the extensions to $\kx$ of arbitrary (not necessarily henselian) valuations on a field $K$
 \cite{hos,hmos}.
 
These polynomials were further studied by several authors and the comparison with Mac Lane-Vaqui\'e key polynomials is by now fully understood \cite{mahboub,Dec,NS2018,AFFGNR,N2021}.

Although they were classically defined only for valuations with trivial support, the paper \cite{AFFGNR} develops their properties for arbitrary valuations on $\kx$ too.
Let us recall a concrete comparison result between the two sorts of ``key polynomials".

\begin{theorem}\label{comparison}\cite[Thm. 2.21]{AFFGNR}
Let $\nu$ be a valuation on $\kx$ with nontrivial support. A monic polynomial $Q\in\kx\setminus K$ is an abstract key polynomial for $\nu$ if and only if either  $\supp(\nu)=Q\kx$, or the truncation $\nu_Q$ is a valuation and $Q$ is a (MLV) key polynomial of minimal degree for $\nu_Q$.
\end{theorem}

A set $\Psi$ of abstract key polynomials for $\nu$ is said to be \emph{complete} if for all non-constant $g\in\kx$ there exists $Q\in\Psi$ such that
\begin{equation}\label{complete}
\deg(Q)\le\deg(g)\quad\mbox{ and }\quad \nu_Q(g)=\nu(g).
\end{equation}

As a consequence of Theorem \ref{OkML}, we derive another interpretation of abstract key polynomials.

\begin{theorem}\label{HOS}
Suppose that $(K,v)$ is henselian and let $\left[\Phi_0,\dots,\Phi_r,\Phi_{r+1}=\{F\}\right]$ be an Okutsu frame of some  $F\in\irr(K)$. Then, the set $\,\Phi_0\cup\cdots\cup\Phi_r\cup\{F\}$ is a complete set of abstract key polynomials for $v_F$.
\end{theorem}

\begin{proof}
By Theorems \ref{OkML}	and \ref{comparison}, all polynomials in the set $\Phi_0\cup\cdots\cup\Phi_r\cup\{F\}$ are abstract key polynomials for $v_F$.

Take a monic $g\in\kx\setminus K$. If $\deg(g)\ge \deg(F)$, then (\ref{complete}) is satisfied for $Q=F$.

If $\deg(g)<\deg(F)$, then there exists $0\le\ell\le r$ such that $m_\ell\le\deg(g)<m_{\ell+1}$.

With the notation in Theorem \ref{OkML}, consider the MLV chain of $v_F$:
$$\mu_0\ \stackrel{\phi_1,\ga_1}\lra\  \mu_1\ \lra\ \cdots
\ \stackrel{\phi_{r-1},\ga_{r-1}}\lra\ \mu_{r-1}
\ \stackrel{\phi_{r},\ga_{r}}\lra\ \mu_{r}\ \stackrel{F,\infty}\lra\ \mu_{r+1}=v_F.
$$

If $\Phi_\ell=\{\phi_\ell\}$, then $\mu_{\ell+1}=[\mu_\ell; \phi_{\ell+1},\ga_{\ell+1}]$ is an ordinary augmentation and $\ty(\mu_\ell,v_F)=\ty(\mu_\ell,\mu_{\ell+1})=[\phi_{\ell+1}]_{\mu_\ell}$. Since $\deg(g)<\deg(\phi_{\ell+1})$, necessarily $\phi_{\ell+1}\nmid_{\mu_\ell}g$ and  $\mu_\ell(g)=v_F(g)$ by Lemma \ref{propertiesTMN}. Thus, (\ref{complete}) is satisfied for $Q=\phi_\ell$.

If $\Phi_\ell=\left\{\chi_i\mid i\in I_\ell\right\}$, then $\mu_{\ell+1}=[\aa_\ell;\,\phi_{\ell+1},\ga_{\ell+1}]$ is a limit augmentation and $\phi_{\ell+1}$ is a limit key polynomial for $\aa_\ell$. Hence, $g$ is $\aa_\ell$-stable and (\ref{complete}) is satisfied for $Q=\chi_i$, for a sufficiently large $i\in I_\ell$.	
\end{proof}\e

The converse statement follows easily from the definitions.
Consider a complete set of abstract key polynomials for $\nu:=v_F$, $$\Psi=\Psi_{m_0}\cup\cdots\Psi_{m_r}\cup\Psi_{m_{r+1}}=\{F\},\quad m_0=1<m_1<\cdots<m_r<\deg(F),$$  where all polynomials in $\Psi_{m_\ell}$ have degree $m_\ell$. %Clearly, for all $m_\ell$ such that $\Psi_{m_\ell}$ contains a maximal element $Q_{m_\ell}$ (with respect to the action of $v_F$), we may assume that $\Psi_{m_\ell}=\{Q_{m_\ell}\}$. 

In order to show that the list $[\Psi_1,\dots,\Psi_r,\{F\}]$ is an Okutsu frame of $F$, we need only to check that the property (\ref{propertiesFrame}) is satisfied.

Take a monic $g\in\kx$ such that $0<\deg(g)<m_{\ell+1}$ for some  $0\le \ell\le r$. By the completeness of $\Psi$, there exists $P\in\Psi$ such that $\deg(P)\le\deg(g)$ and $\nu_P(g)=\nu(g)$.
Since $\deg(P)\le m_\ell$, there exists $Q\in\Psi_{m_\ell}$ such that $\nu_P\le \nu_Q\le \nu$. Thus, $\nu_Q(g)=\nu(g)$ as well. By Theorem \ref{comparison}, $Q$ is a MLV key polynomial of minimal degree for $\nu_Q$. Thus, Theorem \ref{univbound} shows that
$$
\wt(g)=\dfrac{\nu(g)}{\deg(g)}=\dfrac{\nu_Q(g)}{\deg(g)}\le\dfrac{\nu_Q(Q)}{m_\ell}= \dfrac{\nu(Q)}{m_\ell}=\wt(Q).$$

\end{document}